\newtheorem{lemma}{Lemma}
\newtheorem{theorem}{Theorem}
\newtheorem{assumption}{Assumption}
\newtheorem{remark}{Remark}
\DeclareMathOperator{\diag}{diag}
\DeclareMathOperator{\argmin}{argmin}
\DeclareMathOperator{\VI}{VI}
\DeclareMathOperator{\SOL}{SOL}
\DeclareMathOperator{\LCP}{LCP}
\newenvironment{proof}{{\bf Proof.}}{\hfill $\square$}
\begin{document}

\begin{frontmatter}

\title{Distributed sub-optimal resource allocation via a projected form of singular perturbation\thanksref{footnoteinfo}} 
\thanks[footnoteinfo]{The material in this paper was not presented at any conference.}

\author[USTB]{Shu Liang} \ead{sliang@ustb.edu.cn},
\author[BIT]{Xianlin Zeng}\ead{xianlin.zeng@bit.edu.cn},
\author[CAS]{Guanpu Chen}\ead{chengp@amss.ac.cn},
\author[CAS]{Yiguang Hong}\ead{yghong@iss.ac.cn}

\address[USTB]{Key Laboratory of Knowledge Automation for Industrial Processes of Ministry of Education, School of Automation and Electrical Engineering, University of Science and Technology Beijing, Beijing 100083, China}

\address[BIT]{Key Laboratory of Intelligent Control and Decision of Complex Systems, School of Automation, Beijing Institute of Technology, 100081, Beijing, China}
\address[CAS]{Key Laboratory of Systems and Control, Academy of
Mathematics and Systems Science, Chinese Academy of Sciences,
Beijing, 100190, China}

\begin{keyword}
Distributed optimization, resource allocation, sub-optimality, weight-balanced graph, singular perturbation
\end{keyword}

\begin{abstract}
Distributed optimization for resource allocation problems is investigated and a sub-optimal continuous-time algorithm is proposed. Our algorithm has lower order dynamics than others to reduce burdens of computation and communication, and is applicable to weight-balanced graphs. Moreover, it can deal with both local set constraints and coupled inequality constraints, and remove the requirement of twice differentiability of the cost function in comparison with the existing sub-optimal algorithm. However, this algorithm is not easy to be analyzed since it involves singular perturbation type dynamics with projected non-differentiable right-hand side. We overcome the encountered difficulties and obtain results including the existence of an equilibrium, the sub-optimality, and the convergence of the algorithm.
\end{abstract}
\end{frontmatter}
\section{Introduction}
{\color{white}
\cite{Xiao2006Optimal,Lakshmanan2008Decentralized,Nedic2018Resource,Yuan2018Adaptive,Zhu2019Resource,Xu2019Regularization}
\cite{Halabian2019D5G,Bandi2018Robust,Yang2017Distributed,Cherukuri2015Distributed}
\cite{Cherukuri2016Initialization,Yi2016Initialization,Yun2019Initialization,Liang2018Distributed}
\cite{Kia2017Distributed,Deng2018Distributed,Johansson2008Subgradient,Liang2018Singular,Kokotovic1999Singular}
}\vspace{-110pt}\\
Recently, distributed multi-agent resource allocation optimization has received much attention from various fields such as control and optimization \cite{Xiao2006Optimal,Lakshmanan2008Decentralized,Nedic2018Resource,Yuan2018Adaptive,Zhu2019Resource,Xu2019Regularization}, communication \cite{Halabian2019D5G}, management \cite{Bandi2018Robust}, and power system \cite{Yang2017Distributed}. Many continuous-time algorithms have been developed to solve these problems. For a brief review, a Laplacian-gradient dynamics has been presented in \cite{Cherukuri2015Distributed}, while initialization-free algorithms have been introduced in \cite{Cherukuri2016Initialization,Yi2016Initialization,Yun2019Initialization}. In particular, algorithms given in \cite{Cherukuri2016Initialization,Yi2016Initialization} are based on primal-dual gradient flows, while the algorithm introduced in \cite{Yun2019Initialization} is based on dual gradient. In addition, a distributed algorithm dealing with coupled inequality constraints has been proposed in \cite{Liang2018Distributed} via a modified Lagrangian function.

Network topology is an essential part in distributed algorithm design and analysis. Many distributed algorithms for resource allocation problems rely on undirected graphs, such as \cite{Xiao2006Optimal,Lakshmanan2008Decentralized,Yi2016Initialization,Liang2018Distributed,Yun2019Initialization}. It is well-known that balanced digraphs are less restrictive and more general than undirected graphs. A few works such as \cite{Cherukuri2016Initialization,Kia2017Distributed,Deng2018Distributed} have considered weight-balanced graphs for resource allocation problems, but their methods need additional computation for the spectral information of the Laplacians.

Sub-optimal solution is sometimes preferable because it may simplify algorithm design and reduce the cost of computation. For example, \cite{Johansson2008Subgradient} has developed a simple distributed algorithm to solve an optimal consensus problem and obtained an sub-optimal solution. How can the sub-optimal concept further serve distributed optimization? It is known that distributed algorithms get involved with networks for information sharing, where local ``uncoordinated'' flows must be compensated for the desired optimality.
It becomes much difficult for directed graphs, because an unidirectional flow can only be compensated by others in the network. With these observations, \cite{Liang2018Singular} has presented a simple distributed algorithm for a special resource allocation problem via singular perturbation, which reduces computation and communication burdens and obtains a sub-optimal solution.

In this paper, we propose a projected singular perturbation dynamics for resource allocation problems with local set constraints and coupled inequality constraints. Although the idea originates from \cite{Liang2018Singular}, the previous analysis is not applicable to our new algorithm. One reason is that singular perturbation analysis provides first few terms in the Taylor expansion of the trajectory, which requires at least continuous differentiability on the right-hand side of the differential equation \cite{Kokotovic1999Singular}. However, due to the presence of projection in both fast and slow dynamics, the differentiability does not hold. In fact, it is even difficult to ascertain the existence of an equilibrium and its stability and optimality. To overcome these, we employ theories from linear complementarity problems and variational inequalities, and treat the primal and dual parts as two interacted static systems: the former is a perturbed variational inequality problem and the latter is a perturbed complementarity problem. The main contributions of this work are summarized as follows.
\begin{enumerate}[1)]
\item A distributed singular perturbation type dynamics is developed to solve resource allocation problems with local set constraints and coupled inequality constraints over weight-balanced graphs, whereas \cite{Liang2018Singular} deals with a special problem with coupled equality constraints only.
\item New analysis methods for the equilibrium, sub-optimality and convergence are provided, which deal with a challenging problem involving singular perturbation dynamics with non-differentiable right-hand side. In addition, the assumption on the twice continuous differentiability of the cost function is relaxed.
\item Our algorithm uses local primal and dual variables without any auxiliary variable. Therefore, it has lower order dynamics than those in \cite{Cherukuri2016Initialization,Kia2017Distributed,Deng2018Distributed}, and reduces the computation and communication burden.
\end{enumerate}


\section{Preliminaries}
\label{sec:preliminaries}
In this section, we give the basic notations and introduce preliminary knowledge about convex analysis, variational inequalities, and graph theory.

$\mathbb{R}^n$ is the $n$-dimensional real vector space and $\mathbb{R}^n_+$ is the nonnegative orthant. $I_n$ is the unit matrix in $\mathbb{R}^{n\times n}$.
$\|\cdot\|$ is the Euclidean norm and $\mathbb{B}$ is the unit ball in a Euclidean space.
$\otimes$ is the operator of Kroneckor's product.
$col(x_1,...,x_n)$ is the column vector stacked with column vectors $x_1,...,x_n$.
For a vector $a\in \mathbb{R}^n$, $a \leq\bm{0}$ (or $a < \bm{0}$) means that each component of $a$ is less than or equal to zero (or smaller than zero). For vectors $a,b \in \mathbb{R}^n$, $a\perp b$ means that $a^Tb = 0$.

For a closed convex set $C$, the {\em projection} map $P_{C}:\mathbb{R}^n \to C$ is defined as $P_{C}(x) \triangleq \mathop{\argmin}_{y\in C} \|x-y\|$. Two basic properties with respect to the projection operator hold:
\begin{align}
\label{eq:pro_projection}
&(x-P_{C}(x))^{T}(P_{C}(x)-y) \geq 0, \quad \forall \, y\in C\text{,}\\
\label{eq:pro_Lip}
&\|P_{C}(x)-P_{C}(y)\|\leq \|x-y\|, \quad \forall \, x,y\in \mathbb{R}^n\text{.}
\end{align}
For $x\in C$, the {\em tangent cone} to $C$ at $x$ is $\mathcal{T}_C(x) \triangleq \{\lim_{k\to\infty}\frac{x_k-x}{t_k}\,|\,x_k\in C, t_k>0,\text{ and }x_k \to x, t_k\to 0\}$, and the {\em normal cone} to $C$ at $x$ is $\mathcal{N}_C(x) \triangleq \{v\in \mathbb{R}^n\,|\,v^T(y -x) \leq 0, \text{ for all } y\in C\}$.

A differentiable function $f: C\to \mathbb{R}$ is said to be $\mu$-strongly convex for some constant $\mu \geq 0$ if $(x-y)^T(\nabla f(x) - \nabla f(y)) \geq \mu \|x - y\|^2,\, \forall\,x,y\in C$. In other words, $\nabla f$ is $\mu$-strongly monotone.

Given a subset $\Omega \subseteq \mathbb{R}^n$ and a map $F: \Omega \to \mathbb{R}^n$, the problem of {\em variational inequality}, denoted by $\VI(\Omega,F)$,
is to find a vector $x\in \Omega$ such that
\begin{equation*}
(y-x)^TF(x) \geq 0,\quad \forall\, y\in \Omega,
\end{equation*}
and the set of solutions is denoted by $\SOL(\Omega,F)$.
When $\Omega$ is closed and convex, the solution of $\VI(\Omega,F)$ can be equivalently reformulated via projection or the normal cone \cite{Facchinei2003Finite}:
\begin{equation}\label{eq:SOL2Pro}
\begin{aligned}
x\in \SOL(\Omega,F) & \Longleftrightarrow  \bm{0} = P_\Omega(x-F(x)) - x\\
& \Longleftrightarrow \bm{0} \in F(x) + \mathcal{N}_\Omega(x)
\end{aligned}
\end{equation}
In particular, if $\Omega = \mathbb{R}^n_+$ and $F(x) = q + Mx$ for some vector $q\in \mathbb{R}^n$ and matrix $M\in\mathbb{R}^{n\times n}$, then the variational inequality becomes so-called {\em linear complementarity problem}, denoted by $\LCP(q, M)$, with its solution set denoted by $\SOL(q,M)$.

Consider a network topology described by a weighted graph $\mathcal{G} = \{\mathcal{V}, \mathcal{E}, \mathcal{A}\}$, where $\mathcal{V}= \{ 1,2, \ldots N\}$ is the node set, $\mathcal{E} \subseteq \mathcal{V} \times \mathcal{V}$ is the (oriented) edge set, and $\mathcal{A} = [a_{ij}] \in \mathbb{R}^{N\times N}$ is a nonnegative weight matrix. An edge $(j,i) \in \mathcal{E}$ means that node $j$ can send its information to node $i$. In this case, node $j$ is said to be an in-neighbor of node $i$. The set of all in-neighbors of node $i$ is denoted by $\mathcal{N}_i$. Also, $a_{ij} > 0 $ if $j \in \mathcal{N}_i$, while $a_{ij} = 0$ otherwise. A path is a sequence of vertices connected by edges.
$\mathcal{G}$ is said to be {\em strongly connected} if there is a path between any pair of vertices.
$\mathcal{G}$ is said to be {\em weight-balanced} if for every $i\in\mathcal{V}$, $\sum_{j = 1}^N a_{ij} = \sum_{j = 1}^N a_{ji} = d_i$.
The Laplacian matrix of the weight-balanced $\mathcal{G}$ is $L= \mathcal{D} - \mathcal{A}$, where $\mathcal{D} = \diag\{d_1, ...,  d_N\} \in \mathbb{R}^{N \times N}$. If $\mathcal{G}$ is strongly connected and weight-balanced, then $L + L^T$ is positive semidefinite and $0$ is its simple eigenvalue.
\section{Formulation and algorithm}
\label{sec:problem}
In this section, we formulate the distributed resource allocation problem and present our distributed algorithm.

\subsection{Problem formulation}
Consider a multi-agent network with graph $\mathcal{G} = \{\mathcal{V}, \mathcal{E}, \mathcal{A}\}$. For each $i\in \mathcal{V}$, the $i$th agent has a decision variable $x_i$ in a local feasible set $\Omega_i\subset \mathbb{R}^{n_i}$. Also, it has a cost function $f_i: \Omega\to \mathbb{R}$ and a resource map $g_i: \Omega_i \to \mathbb{R}^p$. Define
\begin{equation*}
\bm{x} \triangleq col(x_1, x_2, ..., x_N),\quad \bm{\Omega} \triangleq \Omega_1 \times \Omega_2 \times \cdots \times \Omega_N,\\
\end{equation*}
and the total cost function and resource map
\begin{align*}
f(\bm{x}) & \triangleq f_1(x_1) + f_2(x_2) + \cdots + f_N(x_N),\\
g(\bm{x}) & \triangleq g_1(x_1) + g_2(x_2) + \cdots + g_N(x_N).
\end{align*}
Then the resource allocation problem with coupled inequality constraints can be formulated as
\begin{equation}\label{eq:optimizationProblem}
\min_{\bm{x} \in \bm{\Omega}} f(\bm{x}), \,\, \text{ s.t. } \,\, g(\bm{x})\leq \bm{0}.
\end{equation}
Our goal is to design a distributed algorithm for problem \eqref{eq:optimizationProblem} and find some sub-optimal solution. Of course, the
design of sub-optimal algorithms should be simpler than those for optimal solutions. We introduce Assumption \ref{assum:1} for the considered distributed optimization problem.
\begin{assumption}\label{assum:1}
~{\em
\begin{itemize}
\item (Objective function) For each $i\in\mathcal{V}$, $f_i$ is $\mu_f$-strongly convex over $\Omega_i$ for some constant $\mu_f>0$, and $\nabla f_i$ is $\kappa_f$-Lipschitz continuous over $\Omega_i$ for some $\kappa_f >0$.
\item (Constraint set and function) For each $i\in\mathcal{V}$, $\Omega_i$ is closed and convex, and $g_i$ is convex and $\kappa_g$-Lipschitz continuous over $\Omega_i$ for some constant $\kappa_g>0$. Also, $\nabla g_i $ is locally Lipschitz continuous over $\Omega_i$.
\item (Slator's constraint qualification) There exists a vector $\tilde{\bm{x}}$ that belongs to the relative interior of $\bm{\Omega}$ and satisfies $g(\tilde{\bm{x}}) < \bm{0}$.
\item (Network topology) Graph $\mathcal{G}$ is strongly connected and weight-balanced.
\end{itemize}}
\end{assumption}

The convexity of the cost and constraint functions ensures that \eqref{eq:optimizationProblem} is a convex optimization problem.
The smoothness enables the use of gradient and the constraint qualification ensures first-order necessary conditions.
These assumptions are basic and widely used for constrained convex optimizations \cite{Luenberger2016Linear}. The strong connectivity and weight-balance of the network are the same as those in  \cite{Cherukuri2016Initialization,Kia2017Distributed,Liang2018Singular,Deng2018Distributed}.


\subsection{Distributed algorithm}
Our algorithm for problem \eqref{eq:optimizationProblem} is given as follows.
\begin{algorithm}
\caption{(for each $i\in\mathcal{V}$)}
\label{alg:2}
\textbf{Initialization}:
\begin{equation*}
x_i(0) \in \Omega_i,\quad \lambda_i(0) \in \mathbb{R}_+^p.
\end{equation*}
\textbf{Update flows}:
\begin{equation}\label{eq:Algorithm2}
\left\{%
\begin{aligned}
\dot{x}_i & = P_{\Omega_i}(x_i - \nabla f_i(x_i) - \nabla g_i(x_i)\lambda_i) - x_i\\
\varepsilon \dot{\lambda}_i & = \max\big\{-\varepsilon \lambda_i, \,\, \varepsilon g_i(x_i) - \sum_{j \in \mathcal{N}_i}a_{ij}(\lambda_i - \lambda_j)\big\}
\end{aligned}\right.
\end{equation}
where $\varepsilon > 0$ is a small tunable parameter.
\end{algorithm}

Algorithm \ref{alg:2} is distributed since the update flows of the $i$th agent need only $x_i$, $\lambda_i$, $\nabla f_i(x_i)$, $\nabla g_i(x_i)$ and the neighbors' $\lambda_j$.
The compact form of \eqref{eq:Algorithm2} can be written as
\begin{equation}\label{eq:Algorithmcompact2}
\left\{%
\begin{aligned}
\dot{\bm{x}} & =  P_{\bm{\Omega}}(\bm{x} - \nabla f(\bm{x}) - \bm{v}(\bm{x},\bm{\lambda})) - \bm{x}\\
\varepsilon \dot{\bm{\lambda}} & = P_{\mathbb{R}^{pN}_+}(\varepsilon \bm{\lambda} + \varepsilon \bm{u}(\bm{x}) - \bm{L} \bm{\lambda}) - \varepsilon \bm{\lambda}
\end{aligned}\right.
\end{equation}
where $\bm{\lambda} \triangleq col(\lambda_1,..., \lambda_N)$, $\bm{u}(\bm{x}) \triangleq col(g_1(x_1),...,g_N(x_N))$, $\bm{v}(\bm{x},\bm{\lambda}) \triangleq col(\nabla g_1(x_1)\lambda_1, ..., \nabla g_N(x_N)\lambda_N)$, $\bm{L} \triangleq L \otimes I_{p}$, and $L$ is the Laplacian matrix.

\begin{remark}
{\em
The sub-optimal algorithm given in \cite{Liang2018Singular} for coupled equality constraints is
\begin{equation*}
\left\{%
\begin{aligned}
\dot{\bm{x}} & =  - \nabla f(\bm{x}) - \bm{v}(\bm{x},\bm{\lambda})\\
\varepsilon \dot{\bm{\lambda}} & = \varepsilon \bm{u}(\bm{x}) - \bm{L} \bm{\lambda}
\end{aligned}\right.
\end{equation*}
Our dynamics \eqref{eq:Algorithmcompact2} uses projections to deal with local set constraints and coupled inequalities constraints. Since the projections are not differentiable, some technical difficulties occur in singular perturbation analysis.
}
\end{remark}

\begin{remark}
{\em
Alternative update flows over undirected graphs, referring to \cite{Yi2016Initialization}, can be used as follows.
\begin{equation}\label{eq:algorithmCompair}
\left\{%
\begin{aligned}
\dot{\bm{x}} & =  P_{\bm{\Omega}}(\bm{x} - \nabla f(\bm{x}) - \bm{v}(\bm{x},\bm{\lambda})) - \bm{x}\\
\dot{\bm{\lambda}} & = P_{\mathbb{R}^{pN}_+}(\bm{\lambda} + \bm{u}(\bm{x}) - \bm{L} \bm{\lambda} - \bm{L}\bm{v}) - \bm{\lambda}\\
\dot{\bm{v}} & = \bm{L} \bm{\lambda}
\end{aligned}\right.
\end{equation}
Compared with \eqref{eq:algorithmCompair}, dynamics \eqref{eq:Algorithmcompact2} does not employ the auxiliary variable $\bm{v}$ so that the computation and communication are simplified.
}
\end{remark}
\section{Algorithm analysis}
\label{sec:mainresults}
In this section, we analyze the existence of an equilibrium, the sub-optimality, and the convergence.

\subsection{Existence}\label{subsec:1}
An equilibrium $(\bm{x},\bm{\lambda})$ of Algorithm \ref{alg:2} is a solution to
\begin{subequations}\label{eq:equiAlg}
\begin{align}
\label{eq:equiX}
\bm{0} & =  P_{\bm{\Omega}}(\bm{x} - \nabla f(\bm{x}) - \bm{v}(\bm{x},\bm{\lambda})) - \bm{x}\\
\label{eq:equiLambda}
\bm{0} & = P_{\mathbb{R}^{pN}_+}(\varepsilon \bm{\lambda} + \varepsilon \bm{u}(\bm{x}) - \bm{L} \bm{\lambda}) - \varepsilon \bm{\lambda}
\end{align}
\end{subequations}
which involves projections and nonlinear maps. To show the existence, we first consider the following auxiliary equations
\begin{subequations}\label{eq:equiAlg2}
\begin{align}
\label{eq:equiX2}
\bm{0} & =  P_{\bm{\mathcal{X}}}(\bm{x} - \nabla f(\bm{x}) - \bm{v}(\bm{x},\bm{\lambda})) - \bm{x}\\
\label{eq:equiLambda2}
\bm{0} & = P_{\mathbb{R}^{pN}_+}(\varepsilon \bm{\lambda} + \varepsilon \bm{u}(\bm{x}) - \bm{L} \bm{\lambda}) - \varepsilon \bm{\lambda}
\end{align}
\end{subequations}
where
\begin{equation}\label{eq:constraintX}
\bm{\mathcal{X}} \triangleq \{\bm{x} \in \bm{\Omega} \,|\, g(\bm{x}) \leq \bm{0}\}.
\end{equation}
By \eqref{eq:SOL2Pro}, $\bm{x}$ satisfies \eqref{eq:equiX2} if and only if it is a solution to $\VI(\bm{\mathcal{X}}, \nabla f(\cdot) + \bm{v}(\cdot,\bm{\lambda}))$, regarding $\bm{\lambda}$ as an external input. Also, $\bm{\lambda}$ is a solution to \eqref{eq:equiLambda2} if and only if it is a solution to the generalized equation
\begin{equation*}
\bm{0} \in \varepsilon \bm{u}(\bm{x}) - \bm{L} \bm{\lambda} + \mathcal{N}_{\mathbb{R}_+^{pN}}(\varepsilon\bm{\lambda})= \varepsilon \bm{u}(\bm{x}) - \bm{L} \bm{\lambda} + \mathcal{N}_{\mathbb{R}_+^{pN}}(\bm{\lambda}),
\end{equation*}
which is also equivalent to $\LCP(-\varepsilon \bm{u}(\bm{x}),\bm{L})$, regarding $\bm{x}$ as an external input.
In this way, we can interpret \eqref{eq:equiAlg2} as two interacted static subsystems: One is $\VI$, whose input is $\bm{\lambda}$ and output is
\begin{equation*}
\bm{x} \in \SOL(\bm{\mathcal{X}}, \nabla f(\cdot) + \bm{v}(\cdot,\bm{\lambda})) \triangleq G_1(\bm{\lambda}).
\end{equation*}
The other one is $\LCP$, whose input is $\bm{x}$ and output is
\begin{equation*}
\bm{\lambda} \in \SOL(-\varepsilon \bm{u}(\bm{x}),\bm{L}) \triangleq G_2(\bm{x}).
\end{equation*}
The structure between $G_1$ and $G_2$ is shown in Fig. \ref{fig:structure}.
\begin{figure}[H]
\begin{center}
\includegraphics[width=5cm]{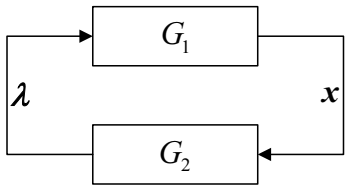}    
\caption{Structure between $G_1$ and $G_2$.} \label{fig:structure}
\end{center}
\end{figure}
Consequently, $(\bm{x}, \bm{\lambda})$ is a solution to \eqref{eq:equiAlg2} if $\bm{x} \in G_1(\bm{\lambda})$ and $\bm{\lambda}\in G_2(\bm{x})$, which leads to fixed-point equations
\begin{equation}\label{eq:fixedPoint}
\bm{x} \in G_1(G_2(\bm{x}))\quad \text{and} \quad \bm{\lambda} \in G_2(G_1(\bm{\lambda})).
\end{equation}
Note that $G_1$ and $G_2$ depend on data of the optimization problem, and $G_2$ also depends on the parameter $\varepsilon$.

\begin{lemma}\label{lem:1}
Under Assumption \ref{assum:1}, $G_1(\bm{\lambda})$ is nonempty and contains only one element for any $\bm{\lambda} \geq \bm{0}$. Moreover,
\begin{equation*}
\|G_1(\bm{\lambda}') - G_1(\bm{\lambda})\| \leq \frac{\kappa_g}{\mu_f}\|\bm{\lambda}' - \bm{\lambda}\|,\quad \forall\,\bm{\lambda}',\bm{\lambda} \geq \bm{0}.
\end{equation*}
\end{lemma}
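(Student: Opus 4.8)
The plan is to recognize $G_1(\bm{\lambda})$ as the solution map of the strongly monotone variational inequality $\VI(\bm{\mathcal{X}}, F_{\bm{\lambda}})$ with $F_{\bm{\lambda}}(\bm{x}) \triangleq \nabla f(\bm{x}) + \bm{v}(\bm{x},\bm{\lambda})$, and then invoke standard VI theory. First I would record that $\bm{\mathcal{X}}$ is nonempty (by Slater's qualification $g(\tilde{\bm{x}})<\bm{0}$), closed, and convex (each $\Omega_i$ is closed convex and each $g_i$ is convex), and that $F_{\bm{\lambda}}$ is continuous (since $\nabla f$ is Lipschitz and $\nabla g_i$ is locally Lipschitz). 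The crux of the existence/uniqueness part is to verify that $F_{\bm{\lambda}}$ is $\mu_f$-strongly monotone whenever $\bm{\lambda}\geq\bm{0}$: the term $\nabla f$ supplies $\mu_f$-strong monotonicity by assumption, while $\bm{v}(\cdot,\bm{\lambda})$ is merely monotone. Indeed, writing $\nabla g_i(x_i)\lambda_i = \sum_{k=1}^p \lambda_i^{(k)}\nabla g_i^{(k)}(x_i)$, each $\nabla g_i^{(k)}$ is monotone because $g_i^{(k)}$ is convex, and $\lambda_i^{(k)}\geq 0$ makes the nonnegative combination monotone. Hence $F_{\bm{\lambda}}$ is strongly monotone, and a strongly monotone VI on a closed convex set admits a unique solution (Facchinei--Pang), giving that $G_1(\bm{\lambda})$ is a singleton for every $\bm{\lambda}\geq\bm{0}$.

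For the Lipschitz estimate I would use the two-point VI argument. Put $\bm{x}=G_1(\bm{\lambda})$ and $\bm{x}'=G_1(\bm{\lambda}')$. Testing the VI for $\bm{x}$ with $\bm{y}=\bm{x}'$ and the VI for $\bm{x}'$ with $\bm{y}=\bm{x}$, then adding the two inequalities, yields
\begin{equation*}
(\bm{x}'-\bm{x})^T\big(F_{\bm{\lambda}}(\bm{x}) - F_{\bm{\lambda}'}(\bm{x}')\big) \geq 0.
\end{equation*}
I would then split $F_{\bm{\lambda}}(\bm{x}) - F_{\bm{\lambda}'}(\bm{x}')$ into $[\nabla f(\bm{x})-\nabla f(\bm{x}')] + [\bm{v}(\bm{x},\bm{\lambda})-\bm{v}(\bm{x}',\bm{\lambda})] + [\bm{v}(\bm{x}',\bm{\lambda})-\bm{v}(\bm{x}',\bm{\lambda}')]$. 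Strong monotonicity of $\nabla f$ bounds the first paired term by $-\mu_f\|\bm{x}-\bm{x}'\|^2$; monotonicity of $\bm{v}(\cdot,\bm{\lambda})$ makes the second paired term nonpositive; and for the third I would bound $\|\bm{v}(\bm{x}',\bm{\lambda})-\bm{v}(\bm{x}',\bm{\lambda}')\|\leq \kappa_g\|\bm{\lambda}-\bm{\lambda}'\|$, using that $\kappa_g$-Lipschitz continuity of each $g_i$ forces $\|\nabla g_i(x_i)\|\leq \kappa_g$ in operator norm. Combining via Cauchy--Schwarz gives $0 \leq -\mu_f\|\bm{x}-\bm{x}'\|^2 + \kappa_g\|\bm{x}-\bm{x}'\|\,\|\bm{\lambda}-\bm{\lambda}'\|$, and dividing by $\|\bm{x}-\bm{x}'\|$ (the case $\bm{x}=\bm{x}'$ being trivial) produces the claimed constant $\kappa_g/\mu_f$.

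I expect the main obstacle to be the monotonicity bookkeeping rather than any deep machinery: one must be careful that the constraint $\bm{\lambda}\geq\bm{0}$ is exactly what is needed to keep $\bm{v}(\cdot,\bm{\lambda})$ monotone (so it does not spoil the strong monotonicity inherited from $\nabla f$), which is also why the lemma is stated only for $\bm{\lambda}\geq\bm{0}$. A secondary technical point worth flagging is that $\bm{\mathcal{X}}$ need not be bounded, so existence cannot rest on a compactness/Brouwer argument; it instead relies on the coercivity automatically implied by strong monotonicity. Finally, the step $\|\nabla g_i\|\leq\kappa_g$ should be justified from the global Lipschitz bound on $g_i$ (passing from the scalar Lipschitz estimate to the Jacobian operator norm), after which the stacked estimate $\|\bm{v}(\bm{x}',\bm{\lambda})-\bm{v}(\bm{x}',\bm{\lambda}')\|\leq\kappa_g\|\bm{\lambda}-\bm{\lambda}'\|$ follows by summing blockwise.
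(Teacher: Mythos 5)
Your proposal is correct and follows essentially the same route as the paper: monotonicity of $\bm{v}(\cdot,\bm{\lambda})$ for $\bm{\lambda}\geq\bm{0}$ (from convexity of the $g_i$) upgrades $\nabla f + \bm{v}(\cdot,\bm{\lambda})$ to a $\mu_f$-strongly monotone operator, giving a unique VI solution, and the two-point VI argument with the identical decomposition yields the $\kappa_g/\mu_f$ bound. The only difference is that you make explicit a few points the paper leaves implicit (nonemptiness and closed convexity of $\bm{\mathcal{X}}$, and the passage from the Lipschitz constant of $g_i$ to the operator-norm bound on $\nabla g_i$), which is fine.
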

\begin{proof}
The map $\bm{v}(\cdot,\bm{\lambda})$ with $\bm{\lambda} \geq \bm{0}$ is monotone, since
\begin{multline*}
(\bm{x}' - \bm{x})^T(\bm{v}(\bm{x}',\bm{\lambda}) - \bm{v}(\bm{x},\bm{\lambda})) \\
= \sum_{i=1}^N(\bm{x}_i' - \bm{x}_i)^T(\nabla g_i(\bm{x}') - \nabla g_i(\bm{x}))\bm{\lambda}_i \geq 0.
\end{multline*}
Thus, $\nabla f(\cdot) + \bm{v}(\cdot,\bm{\lambda})$ is $\mu_f$-strongly monotone. As a result, there exists a unique solution to $\VI(\bm{\mathcal{X}},\nabla f(\cdot) + \bm{v}(\cdot,\bm{\lambda}))$, i.e., $G_1(\bm{\lambda})$ is a single-valued map.

Let $\bm{x}' = G_1(\bm{\lambda}')$ and $\bm{x} = G_1(\bm{\lambda})$ for any $\bm{\lambda}',\bm{\lambda} \geq \bm{0}$. By the definition of variational inequality,
\begin{align*}
(\bm{x}' - \bm{x})^T(\nabla f(\bm{x}) + \bm{v}(\bm{x},\bm{\lambda})) \geq 0,\\
(\bm{x} - \bm{x}')^T(\nabla f(\bm{x}') + \bm{v}(\bm{x}',\bm{\lambda}')) \geq 0.
\end{align*}
Therefore,
\begin{equation*}
(\bm{x}' - \bm{x})^T(\nabla f(\bm{x}') + \bm{v}(\bm{x}',\bm{\lambda}') - \nabla f(\bm{x}) - \bm{v}(\bm{x},\bm{\lambda})) \leq 0.
\end{equation*}
By the strongly convexity of $\nabla f(\cdot) + \bm{v}(\cdot,\bm{\lambda})$,
\begin{align*}
\mu_f\|\bm{x}' - \bm{x}\|^2 & \leq (\bm{x}' - \bm{x})^T(\bm{v}(\bm{x}',\bm{\lambda}) - \bm{v}(\bm{x}',\bm{\lambda}'))\\
& \leq \kappa_g \|\bm{x}' - \bm{x}\| \cdot \|\bm{\lambda}' - \bm{\lambda}\|.
\end{align*}
This completes the proof.
\end{proof}

\begin{lemma}\label{lem:2}
Under Assumption \ref{assum:1}, the following statements hold:
\begin{enumerate}[1)]
\item $G_2(\bm{x})$ is nonempty for any $\bm{x} \in \bm{\mathcal{X}}$.
\item $G_2$ has a unique single-valued continuous selection $G_2^\sharp$. That is, $G_2^\sharp$ is a continuous map and $G_2^\sharp(\bm{x}) \in G_2(\bm{x})$ for any $\bm{x} \in \bm{\mathcal{X}}$.
\item There is a constant $\kappa_L > 0$ such that
\begin{equation*}
\|G_2^\sharp(\bm{x}') - G_2^\sharp(\bm{x})\| \leq \varepsilon \kappa_g \kappa_L\|\bm{x}' - \bm{x}\|, \quad \forall\,\bm{x}',\bm{x} \in \bm{\mathcal{X}}.
\end{equation*}
\end{enumerate}
\end{lemma}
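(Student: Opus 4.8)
My plan is to treat the three claims in sequence, exploiting that $\bm{L}=L\otimes I_p$ is \emph{monotone}: since $\bm{L}+\bm{L}^T=(L+L^T)\otimes I_p$ is positive semidefinite, the quadratic form satisfies $\bm{\lambda}^T\bm{L}\bm{\lambda}\geq 0$, and the kernel of $\bm{L}+\bm{L}^T$ is exactly the consensus subspace $\{\bm{1}_N\otimes c:c\in\mathbb{R}^p\}$ because $0$ is a simple eigenvalue of $L+L^T$. This one fact drives everything.

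For claim 1) I would reduce solvability to feasibility. Because $\bm{L}$ is monotone it is copositive-plus, and for such matrices a feasible $\LCP$ is solvable \cite{Facchinei2003Finite}; so it suffices to exhibit one $\bm{\lambda}\geq\bm{0}$ with $\bm{L}\bm{\lambda}-\varepsilon\bm{u}(\bm{x})\geq\bm{0}$. The range of $\bm{L}$ is the zero-sum subspace $\{\bm{z}:\sum_{i=1}^N z_i=\bm{0}\}$, where the $z_i\in\mathbb{R}^p$ are the blocks of $\bm{z}$; since $\bm{x}\in\bm{\mathcal{X}}$ gives $g(\bm{x})=\sum_i g_i(x_i)\leq\bm{0}$, I can pick slacks $s_i\geq\bm{0}$ with $\sum_i s_i=-\varepsilon g(\bm{x})\geq\bm{0}$, set $\bm{z}$ with blocks $\varepsilon g_i(x_i)+s_i$ so that $\bm{z}\in\rge(\bm{L})$ and $\bm{z}\geq\varepsilon\bm{u}(\bm{x})$, solve $\bm{L}\bm{\lambda}_0=\bm{z}$, and finally add a large multiple of $\bm{1}_{pN}\in\ker\bm{L}$ to render $\bm{\lambda}\geq\bm{0}$ without changing $\bm{L}\bm{\lambda}$. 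This produces a feasible point, hence $G_2(\bm{x})\neq\emptyset$.

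For claims 2) and 3) the engine is the standard monotone-$\LCP$ comparison. Taking any two solutions $\bm{\lambda},\bm{\lambda}'$ for inputs $\bm{x},\bm{x}'$ with outputs $\bm{w}=\bm{L}\bm{\lambda}-\varepsilon\bm{u}(\bm{x})$ and $\bm{w}'=\bm{L}\bm{\lambda}'-\varepsilon\bm{u}(\bm{x}')$, complementarity and nonnegativity give $(\bm{\lambda}'-\bm{\lambda})^T(\bm{w}'-\bm{w})\leq 0$, while monotonicity gives $(\bm{\lambda}'-\bm{\lambda})^T\bm{L}(\bm{\lambda}'-\bm{\lambda})\geq 0$; combining them,
\begin{multline*}
(\bm{\lambda}'-\bm{\lambda})^T\bm{L}(\bm{\lambda}'-\bm{\lambda})\leq\varepsilon(\bm{\lambda}'-\bm{\lambda})^T(\bm{u}(\bm{x}')-\bm{u}(\bm{x}))\\
\leq\varepsilon\kappa_g\|\bm{\lambda}'-\bm{\lambda}\|\,\|\bm{x}'-\bm{x}\|.
\end{multline*}
Specialising to $\bm{x}'=\bm{x}$ forces $\bm{\lambda}'-\bm{\lambda}\in\ker(\bm{L}+\bm{L}^T)$, so any two solutions differ only by a consensus vector $\bm{1}_N\otimes c$; hence the disagreement component and the output $\bm{w}$ are invariant over $G_2(\bm{x})$, and I would define $G_2^\sharp(\bm{x})$ as the unique minimum-norm element of the closed convex set $G_2(\bm{x})$. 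Uniqueness of the continuous selection then follows because Slater's condition and convexity of $g$ make the strictly feasible inputs dense in $\bm{\mathcal{X}}$; at such an input some output component is strictly positive, which pins the consensus component and makes $G_2(\bm{x})$ a singleton, so every continuous selection is determined on a dense set.

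For the Lipschitz bound I would split $\bm{\lambda}'-\bm{\lambda}$ into its disagreement and consensus parts. The displayed inequality, together with the smallest positive eigenvalue $\sigma$ of $\tfrac12(\bm{L}+\bm{L}^T)$, controls the disagreement part, since $\sigma\|\Pi(\bm{\lambda}'-\bm{\lambda})\|^2\leq\varepsilon\kappa_g\|\bm{\lambda}'-\bm{\lambda}\|\,\|\bm{x}'-\bm{x}\|$ where $\Pi$ projects off the consensus subspace. The hard part is the consensus component, which is exactly the non-unique direction: I would pin it through the complementarity relation defining $G_2^\sharp$, observing that at strictly feasible inputs the consensus value equals the negative of a disagreement component at an index where $\bm{w}>0$, a $\max/\min$ of the already-Lipschitz disagreement data that stays continuous across changes of the active index precisely because the switching components satisfy $w_i=0$ there. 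Assembling the two parts yields a single constant $\kappa_L$ (absorbing $1/\sigma$ and the modulus of the min-norm map) with $\|G_2^\sharp(\bm{x}')-G_2^\sharp(\bm{x})\|\leq\varepsilon\kappa_g\kappa_L\|\bm{x}'-\bm{x}\|$. I expect this consensus/active-set bookkeeping, rather than the existence argument or the monotonicity estimate, to be the delicate step.
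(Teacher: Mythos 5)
Your proposal is correct in substance but takes a genuinely different route from the paper on the two nontrivial claims. For claim 1) both arguments reduce to the fact that a feasible $\LCP$ with a positive semidefinite (copositive-plus) matrix is solvable; the paper merely asserts that $\LCP(-u,L)$ is feasible iff $\bm{1}^Tu\le 0$, whereas you exhibit the feasible point explicitly (a vector in $\rge(\bm{L})$ dominating $\varepsilon\bm{u}(\bm{x})$ plus a shift along $\ker\bm{L}$), which is a more complete version of the same step. The divergence is in claims 2) and 3): the paper notes that the solution set is a singleton whenever $\bm{1}^Tu<0$, invokes the polyhedral Lipschitz bound $S(u')\subseteq S(u)+\kappa_L\|u'-u\|\mathbb{B}$ of Cottle--Pang--Stone (Theorem 7.2.1) as a black box to get $\kappa_L$-Lipschitz continuity on the strictly feasible set, and defines $G_2^\sharp$ on the boundary by continuous extension. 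You instead take the minimum-norm element and derive the modulus by hand from the smallest positive eigenvalue $\sigma$ of $\tfrac12(L+L^T)$ together with the identity that the consensus coordinate equals $-\min_j(\Pi\bm{\lambda})_j$. Your route is more elementary and yields an explicit $\kappa_L$ in terms of $\sigma$ and $N$; and the two selections do coincide, because the unique solution at a strictly feasible input always has a vanishing component, so its boundary limit is exactly the minimal element of the solution ray, which is also the minimum-norm element.

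One step needs repair as written. Your displayed inequality bounds $\sigma\|\Pi(\bm{\lambda}'-\bm{\lambda})\|^2$ by $\varepsilon\kappa_g\|\bm{\lambda}'-\bm{\lambda}\|\,\|\bm{x}'-\bm{x}\|$, whose right-hand side still contains the consensus component of $\bm{\lambda}'-\bm{\lambda}$ --- precisely the quantity you defer to the second step --- so the disagreement part is not yet controlled; and your consensus step in turn treats the disagreement data as ``already Lipschitz.'' As ordered, each half presupposes the other. The fix is to deploy your $\max/\min$ identity first: since the consensus scalar is $-\min_j(\Pi\bm{\lambda})_j$ and $\min$ is $1$-Lipschitz in the sup norm, the consensus part of $\bm{\lambda}'-\bm{\lambda}$ is bounded by $\|\Pi(\bm{\lambda}'-\bm{\lambda})\|_\infty$; substituting this into the displayed inequality removes the circularity and gives $\|\Pi(\bm{\lambda}'-\bm{\lambda})\|\le C\sigma^{-1}\varepsilon\kappa_g\|\bm{x}'-\bm{x}\|$ with $C$ depending only on $N$, after which the full bound follows. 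Relatedly, continuity of a minimum-norm selection is not automatic in general; it holds here only via the identification with the zero-component solution, so that identification should be made the definition of $G_2^\sharp$ rather than recovered afterwards.
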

\begin{proof}
Consider $\LCP(-u,L)$, where $L$ is the Laplacian matrix. A point $z\in \SOL(-u,L)$ if and only if
\begin{subequations}
\begin{align}
\label{eq:LCPcondi1}
z & \geq \bm{0}\\
\label{eq:LCPcondi2}
Lz - u & \geq \bm{0}\\
\label{eq:LCPcondi3}
z^T(Lz - u) & = 0
\end{align}
\end{subequations}
$\LCP(-u,L)$ is said to be feasible if there exists a point $z\in\mathbb{R}^N$ satisfying \eqref{eq:LCPcondi1} and \eqref{eq:LCPcondi2}, not necessarily satisfying \eqref{eq:LCPcondi3}. It follows from \cite[Theorem 3.1.2]{Cottle2009Linear} that $\SOL(-u,L)$ is nonempty if and only if $\LCP(-u,L)$ is feasible.

Since $L$ has rank $N-1$ and $\bm{1}^T L = \bm{0}^T$, $\LCP(-u,L)$ is feasible if and only if $u\in U \triangleq \{u\in\mathbb{R}^N\,|\,\bm{1}^T u \leq 0\}$. Therefore, $S(u) \triangleq \SOL(-u,L)$ is nonempty for $u\in U$, which implies statement 1).

Let $z',z \in S(u)$. Then
\begin{equation*}
(z'-z)^T(Lz - u) \geq 0\quad \text{and}\quad (z-z')^T(Lz' - u) \geq 0,
\end{equation*}
which implies
\begin{equation*}
(z-z')^TL(z - z') = \frac{1}{2}(z-z')^T(L+L^T)(z - z') \leq 0.
\end{equation*}
Since $L+L^T$ is positive semidefinite, $(L+L^T)(z - z') = 0$, which implies $z' = z + \bm{1}s$ for some $s\in \mathbb{R}$. Also, it follows from \eqref{eq:LCPcondi3} that $s\bm{1}^Tu = 0$. Thus, $S(u)$ is a singleton for $u \in U^\circ \triangleq \{ u \in U\,|\,\bm{1}^Tu < 0\}$, and there is a unique selection map $S^\sharp(u) \in S(u)$ for $u\in U^\circ$.
By \cite[Theorem 7.2.1]{Cottle2009Linear}, there exists a constant $\kappa_L > 0$ depending on $L$ such that for any $u',u \in U$,
\begin{equation*}
S(u') \subseteq S(u) + \kappa_L\|u' - u\|\mathbb{B}.
\end{equation*}
Therefore, $S^\sharp$ is $\kappa_L$-Lipschitz continuous over $U^\circ$ and can be extended to $U$ by taking the limit
\begin{equation*}
S^\sharp(\bar{u}) = \lim_{u\to \bar{u},u\in U^\circ} S^\sharp(u),\quad \forall\, \bar{u}\in U.
\end{equation*}
Thus, statements 2) and 3) hold.
\end{proof}

With Lemmas \ref{lem:1} and \ref{lem:2}, we present the following theorem.

\begin{theorem}\label{thm:1}
Under Assumption \ref{assum:1}, for any $\varepsilon \in (0,\varepsilon^*)$ with $\varepsilon^* \triangleq \frac{\mu_f}{\kappa_g^2\kappa_L}$, there exists an equilibrium $(\bm{x}^*_\varepsilon, \bm{\lambda}^*_\varepsilon)$.
\end{theorem}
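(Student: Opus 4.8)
The plan is to produce the equilibrium in two stages: first solve the auxiliary system \eqref{eq:equiAlg2} by a contraction argument, and then repair that solution so that it satisfies the genuine equilibrium condition \eqref{eq:equiAlg}. For the first stage I would study the composite map $\Phi \triangleq G_1\circ G_2^\sharp$. By Lemma \ref{lem:2}, $G_2^\sharp$ is a single-valued continuous map on $\bm{\mathcal{X}}$ whose values are nonnegative (LCP solutions satisfy \eqref{eq:LCPcondi1}), and by Lemma \ref{lem:1}, $G_1$ is single-valued on $\{\bm{\lambda}\geq\bm{0}\}$ with range contained in $\bm{\mathcal{X}}$; hence $\Phi$ maps $\bm{\mathcal{X}}$ into itself. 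Chaining the two Lipschitz bounds gives
\[
\|\Phi(\bm{x}') - \Phi(\bm{x})\| \le \frac{\kappa_g}{\mu_f}\cdot \varepsilon\kappa_g\kappa_L \|\bm{x}' - \bm{x}\| = \frac{\varepsilon}{\varepsilon^*}\|\bm{x}' - \bm{x}\|,
\]
so $\Phi$ is a contraction exactly on the stated range $\varepsilon\in(0,\varepsilon^*)$. Since Slater's qualification supplies $\tilde{\bm{x}}\in\bm{\mathcal{X}}$, the set $\bm{\mathcal{X}}$ from \eqref{eq:constraintX} is nonempty, closed and convex, hence a complete metric space, and the Banach fixed-point theorem yields a unique $\bm{x}^*_\varepsilon\in\bm{\mathcal{X}}$ with $\bm{x}^*_\varepsilon = \Phi(\bm{x}^*_\varepsilon)$. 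Setting $\bm{\lambda}^\sharp \triangleq G_2^\sharp(\bm{x}^*_\varepsilon)$, the pair $(\bm{x}^*_\varepsilon,\bm{\lambda}^\sharp)$ satisfies the fixed-point relations \eqref{eq:fixedPoint} and therefore solves \eqref{eq:equiAlg2}.

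The second and more delicate task is to pass from \eqref{eq:equiAlg2} to \eqref{eq:equiAlg}, whose only difference is $\bm{\mathcal{X}}$ versus $\bm{\Omega}$ in the primal relation. Because $\bm{x}^*_\varepsilon$ solves $\VI(\bm{\mathcal{X}},\nabla f(\cdot)+\bm{v}(\cdot,\bm{\lambda}^\sharp))$ and Slater's qualification holds, the first-order (KKT) conditions for this inequality-constrained variational inequality (see \cite{Facchinei2003Finite}) furnish a multiplier $\bm{\eta}\in\mathbb{R}^p_+$ with $\bm{\eta}^T g(\bm{x}^*_\varepsilon)=0$ and $-\nabla f_i(x^*_i)-\nabla g_i(x^*_i)(\lambda^\sharp_i+\bm{\eta})\in\mathcal{N}_{\Omega_i}(x^*_i)$ for every $i$, where $x^*_i$ and $\lambda^\sharp_i$ denote the $i$th blocks. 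I would then define $\bm{\lambda}^*_\varepsilon \triangleq \bm{\lambda}^\sharp + (\bm{1}\otimes\bm{\eta})$. By construction this makes $\bm{x}^*_\varepsilon$ solve $\VI(\bm{\Omega},\nabla f(\cdot)+\bm{v}(\cdot,\bm{\lambda}^*_\varepsilon))$, which is exactly \eqref{eq:equiX}. Since $\bm{L}(\bm{1}\otimes\bm{\eta})=\bm{0}$, the consensus shift leaves the residual $\bm{L}\bm{\lambda}-\varepsilon\bm{u}(\bm{x}^*_\varepsilon)$ and its sign unchanged and preserves nonnegativity, while it keeps complementarity because the extra inner product equals $-\varepsilon\,\bm{\eta}^T g(\bm{x}^*_\varepsilon)=0$; hence $\bm{\lambda}^*_\varepsilon\in\SOL(-\varepsilon\bm{u}(\bm{x}^*_\varepsilon),\bm{L})$, which is \eqref{eq:equiLambda}. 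Thus $(\bm{x}^*_\varepsilon,\bm{\lambda}^*_\varepsilon)$ is the desired equilibrium.

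I expect the contraction estimate to be routine once Lemmas \ref{lem:1} and \ref{lem:2} are in hand, with only the domain bookkeeping for $\Phi$ (nonnegativity of $G_2^\sharp$ and containment of $G_1$ in $\bm{\mathcal{X}}$) needing a moment's care so that the iteration stays within the complete set $\bm{\mathcal{X}}$. The genuine obstacle is the second stage: a solution of the auxiliary system need not satisfy \eqref{eq:equiAlg} with the same dual variable, and the correct repair is to add the consensus correction $\bm{1}\otimes\bm{\eta}$ built from the KKT multiplier of the $\bm{\mathcal{X}}$-constrained VI. The subtlety is to verify that this single modification simultaneously turns the $\bm{\mathcal{X}}$-projection into the $\bm{\Omega}$-projection and remains admissible for the complementarity problem, which hinges precisely on $\bm{L}(\bm{1}\otimes\bm{\eta})=\bm{0}$ together with $\bm{\eta}^T g(\bm{x}^*_\varepsilon)=0$.
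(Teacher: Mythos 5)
Your proposal is correct and follows essentially the same two-stage route as the paper: a small-gain/contraction argument on $G_1\circ G_2^\sharp$ to solve the auxiliary system \eqref{eq:equiAlg2}, followed by a KKT-based consensus shift $\bm{\lambda}^\sharp \mapsto \bm{\lambda}^\sharp + \bm{1}\otimes\bm{\eta}$ (the paper phrases this via the optimization problem $\min_{\bm{x}\in\bm{\Omega}} f^\dag(\bm{x},\bm{\lambda}^\dag_\varepsilon)$ s.t. $g(\bm{x})\leq\bm{0}$, which is equivalent to your VI-KKT formulation since $\nabla f(\cdot)+\bm{v}(\cdot,\bm{\lambda})$ is a gradient map). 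Your explicit verification that the shift preserves the LCP conditions, using $\bm{1}^T L=\bm{0}^T$ and $\bm{\eta}^T g(\bm{x}^*_\varepsilon)=0$, matches the paper's use of \eqref{eq:equiLambda2} and \eqref{eq:equiLambda3}.
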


\begin{proof}
Since $\varepsilon < \varepsilon^*$, there holds a small gain condition
\begin{equation*}
\frac{\kappa_g}{\mu_f} \cdot \varepsilon \kappa_g\kappa_L <1.
\end{equation*}
Then $G_1(G_2^\sharp(\cdot))$ is a contraction map from $\bm{\mathcal{X}}$ to $\bm{\mathcal{X}}$ and $G_2^\sharp(G_1(\cdot))$ is a contraction map from $\mathbb{R}^{pN}_+$ to $\mathbb{R}^{pN}_+$. Thus, there exists $(\bm{x}_\varepsilon^\dag,\bm{\lambda}_\varepsilon^\dag) \in \bm{\mathcal{X}} \times \mathbb{R}^{pN}_+$ as a solution to \eqref{eq:equiAlg2}.

Next, we construct a solution to \eqref{eq:equiAlg}. Define
\begin{equation*}
f^\dag(\bm{x},\bm{\lambda}) \triangleq \sum_{i=1}^N f_i(x_i) + \lambda_i^Tg_i(x_i).
\end{equation*}
Then $\nabla_{\bm{x}} f^\dag(\bm{x},\bm{\lambda}) = \nabla f(\bm{x}) + \bm{v}(\bm{x},\bm{\lambda})$. By \eqref{eq:equiAlg2}, $\bm{x}^\dag_\varepsilon$ is the optimal solution to
\begin{equation*}
\min_{\bm{x}\in \bm{\Omega}} f^\dag(\bm{x},\bm{\lambda}_\varepsilon^\dag), \,\, \text{ s.t. } \,\, g(\bm{x})\leq \bm{0}.
\end{equation*}
Since the Slater's constraint qualification holds, it follows from Karush-Kuhn-Tucker conditions that there exists a multiplier $\lambda^\ddag \in \mathbb{R}^p$ with $\bm{\lambda}^\ddag \triangleq col(\lambda^\ddag,...,\lambda^\ddag)$ such that
\begin{subequations}
\begin{align}
\label{eq:equiX3}
\bm{0} & \in \nabla f(\bm{x}^\dag_\varepsilon) + \bm{v}(\bm{x}^\dag_\varepsilon,\bm{\lambda}^\dag_\varepsilon + \bm{\lambda}^\ddag) + \mathcal{N}_{\bm{\Omega}}(\bm{x}^\dag_\varepsilon)\\
\label{eq:equiLambda3}
\bm{0} & \leq \lambda^{\ddag} \perp - g(\bm{x}^\dag_\varepsilon) \geq \bm{0}
\end{align}
\end{subequations}
Let $(\bm{x}_\varepsilon^*,\bm{\lambda}_\varepsilon^*) \triangleq (\bm{x}^\dag_\varepsilon, \bm{\lambda}^\dag_\varepsilon + \bm{\lambda}^\ddag)$. It follows from \eqref{eq:equiX3} that $(\bm{x}_\varepsilon^*,\bm{\lambda}_\varepsilon^*)$ renders \eqref{eq:equiX}. Also, it follows from \eqref{eq:equiLambda2} and \eqref{eq:equiLambda3} that $(\bm{x}_\varepsilon^*,\bm{\lambda}_\varepsilon^*)$ renders \eqref{eq:equiLambda}. In other words, $(\bm{x}_\varepsilon^*,\bm{\lambda}_\varepsilon^*)$ is an equilibrium satisfying \eqref{eq:equiAlg}. This completes the proof.
\end{proof}

\begin{remark}
{\em
We first give a solution to \eqref{eq:equiAlg2} and then a solution to \eqref{eq:equiAlg}, by repeatedly taking advantage of variational inequalities. The method is totally different from that given in \cite{Liang2018Singular}. In addition, $\bm{x}^*_\varepsilon \in \bm{\mathcal{X}}$ indicates that the solution satisfies the local and coupled constraints in problem \eqref{eq:optimizationProblem}.
}
\end{remark}
\subsection{Sub-optimality}
The sub-optimality of Algorithm \ref{alg:2} is as follows.
\begin{theorem}\label{thm:2}
Let $\tilde{\varepsilon}^* \triangleq \frac{1}{2} \varepsilon^* = \frac{\mu_f}{2\kappa_g^2 \kappa_L}$ and $K \triangleq \frac{2\kappa_g\kappa_L\|\bm{u}(\bm{x}^*)\|}{\mu_f}$. Then for any $\varepsilon \in (0,\tilde{\varepsilon}^*)$, there holds
\begin{equation}\label{eq:suboptimal}
\|\bm{x}^*_\varepsilon - \bm{x}^*\|\leq K\varepsilon.
\end{equation}
where $\bm{x}^*_\varepsilon$ is given in Theorem \ref{thm:1} and $\bm{x}^*$ is the optimal solution to problem \eqref{eq:optimizationProblem}.
\end{theorem}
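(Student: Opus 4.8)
The plan is to reduce the estimate to a single Lipschitz comparison through the map $G_1$, and then to bound the dual equilibrium $\bm{\lambda}^\dag_\varepsilon$ by anchoring the LCP selection at zero input. First I would identify the two primal points as images of $G_1$. The optimal solution $\bm{x}^*$ of \eqref{eq:optimizationProblem}, being the unique minimizer of the strongly convex $f$ over $\bm{\mathcal{X}}$, solves $\VI(\bm{\mathcal{X}},\nabla f)$; since $\bm{v}(\cdot,\bm{0})=\bm{0}$ this means $\bm{x}^*=G_1(\bm{0})$. On the other hand, the equilibrium supplied by Theorem \ref{thm:1} has $\bm{x}^*_\varepsilon=\bm{x}^\dag_\varepsilon=G_1(\bm{\lambda}^\dag_\varepsilon)$, where $\bm{\lambda}^\dag_\varepsilon=G_2^\sharp(\bm{x}^*_\varepsilon)$ is the fixed point of the contraction $G_2^\sharp\circ G_1$. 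Applying the Lipschitz bound of Lemma \ref{lem:1} then gives $\|\bm{x}^*_\varepsilon-\bm{x}^*\|\le\frac{\kappa_g}{\mu_f}\|\bm{\lambda}^\dag_\varepsilon\|$, so everything comes down to controlling $\|\bm{\lambda}^\dag_\varepsilon\|$.

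The crux, and the step I expect to be the main obstacle, is to prove $\|\bm{\lambda}^\dag_\varepsilon\|\le\varepsilon\kappa_L\|\bm{u}(\bm{x}^*_\varepsilon)\|$. The right anchor is not a feasible primal point but the zero input of the selection itself, because Slater's qualification only yields $g(\tilde{\bm{x}})<\bm{0}$ rather than each $g_i\le\bm{0}$, so statement 3 of Lemma \ref{lem:2} alone does not reach $\bm{\lambda}=\bm{0}$. Instead I would work at the level of the scalar selection $S^\sharp$: the Kronecker structure $\bm{L}=L\otimes I_p$ decouples the $p$ coordinates, so $\bm{\lambda}^\dag_\varepsilon=G_2^\sharp(\bm{x}^*_\varepsilon)$ is $S^\sharp$ applied componentwise to $\varepsilon\bm{u}(\bm{x}^*_\varepsilon)$. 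I would first check $S^\sharp(0)=\bm{0}$: for $u<\bm{0}$ the point $z=\bm{0}$ solves $\LCP(-u,L)$, hence $S^\sharp(u)=\bm{0}$ on that set, and passing to the limit through $U^\circ$ gives $S^\sharp(0)=\bm{0}$. Since $\bm{x}^*_\varepsilon\in\bm{\mathcal{X}}$ forces $g(\bm{x}^*_\varepsilon)\le\bm{0}$, each input $\varepsilon\bm{u}^{(k)}(\bm{x}^*_\varepsilon)$ lies in the domain $U$ where $S^\sharp$ is $\kappa_L$-Lipschitz; subtracting $S^\sharp(0)=\bm{0}$ and summing over the $p$ components yields the claimed bound.

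Combining the two estimates gives $\|\bm{x}^*_\varepsilon-\bm{x}^*\|\le\frac{\varepsilon\kappa_g\kappa_L}{\mu_f}\|\bm{u}(\bm{x}^*_\varepsilon)\|$, and I would close the loop using the triangle inequality and the $\kappa_g$-Lipschitz continuity of $\bm{u}$, namely $\|\bm{u}(\bm{x}^*_\varepsilon)\|\le\|\bm{u}(\bm{x}^*)\|+\kappa_g\|\bm{x}^*_\varepsilon-\bm{x}^*\|$. This produces the self-referential inequality
\[
\|\bm{x}^*_\varepsilon-\bm{x}^*\|\le\frac{\varepsilon\kappa_g\kappa_L}{\mu_f}\|\bm{u}(\bm{x}^*)\|+\frac{\varepsilon}{\varepsilon^*}\|\bm{x}^*_\varepsilon-\bm{x}^*\|,
\]
where I used $\frac{\kappa_g^2\kappa_L}{\mu_f}=1/\varepsilon^*$. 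For $\varepsilon<\tilde{\varepsilon}^*=\varepsilon^*/2$ the coefficient $\varepsilon/\varepsilon^*$ is below $1/2$, so absorbing that term and dividing by $1-\varepsilon/\varepsilon^*>1/2$ gives $\|\bm{x}^*_\varepsilon-\bm{x}^*\|\le\frac{2\kappa_g\kappa_L\|\bm{u}(\bm{x}^*)\|}{\mu_f}\varepsilon=K\varepsilon$. The absorption argument is routine, and it is precisely this division that consumes the factor of two hidden in the halving of $\varepsilon^*$.
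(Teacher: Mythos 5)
Your proof is correct and follows essentially the same route as the paper's: both identify $\bm{x}^*=G_1(\bm{0})$ and $\bm{x}^*_\varepsilon$ as an image of $G_1$, invoke the $\kappa_g/\mu_f$-Lipschitz bound of Lemma \ref{lem:1}, bound the dual equilibrium by $\varepsilon\kappa_L\|\bm{u}(\bm{x}^*_\varepsilon)\|$, and then close the self-referential inequality via the $\kappa_g$-Lipschitz continuity of $\bm{u}$ and the choice $\varepsilon<\varepsilon^*/2$. If anything, your handling of the dual bound---anchoring the single-valued selection at $S^\sharp(\bm{0})=\bm{0}$ and working with $\bm{\lambda}^\dag_\varepsilon=G_2^\sharp(\bm{x}^*_\varepsilon)$ rather than $\bm{\lambda}^*_\varepsilon$---is slightly more careful than the paper's appeal to $\bm{0}\in\SOL(\bm{0},\bm{L})$, since $\SOL(\bm{0},\bm{L})$ is the whole ray $\{s\bm{1}\otimes\bm{1}_p : s\ge 0\}$ and the set-valued Lipschitz estimate alone only controls the distance to that set.
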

\begin{proof}
Since $\bm{x}^*$ is the optimal solution to \eqref{eq:optimizationProblem}, it is also the solution to the variational inequality
$\VI(\bm{\mathcal{X}}, \nabla f(\cdot))$. That is, $\bm{x}^* \in G_1(\bm{0})$. Hence,
\begin{equation*}
\|\bm{x}_\varepsilon^* - \bm{x}^*\| = \|G_1(\bm{\lambda}_\varepsilon^*) - G_1(\bm{0})\| \leq \frac{\kappa_g}{\mu_f}\|\bm{\lambda}_\varepsilon^* - \bm{0}\|.
\end{equation*}
Since $\bm{\lambda}_\varepsilon^* \in \SOL(-\varepsilon \bm{u}(\bm{x}_\varepsilon^*), \bm{L})$ and $\bm{0} \in \SOL(\bm{0},\bm{L})$,
\begin{equation*}
\|\bm{\lambda}_\varepsilon^* - \bm{0}\| \leq \kappa_L\|\varepsilon\bm{u}(\bm{x}_\varepsilon^*) - \bm{0}\|.
\end{equation*}
By the $\kappa_g$-Lipschitz continuity of $\bm{u}(\cdot)$,
\begin{equation*}
\|\bm{u}(\bm{x}_\varepsilon^*)\| \leq \|\bm{u}(\bm{x}^*)\| + \kappa_g \|\bm{x}^*_\varepsilon - \bm{x}^*\|.
\end{equation*}
Therefore,
\begin{equation*}
\bigg(1-\frac{\kappa_g^2}{\mu_f}\kappa_L\varepsilon\bigg)\|\bm{x}_\varepsilon^* - \bm{x}^*\| \leq \frac{\kappa_g}{\mu_f}\kappa_L\varepsilon\|\bm{u}(\bm{x}^*)\|,
\end{equation*}
which implies \eqref{eq:suboptimal}. This completes the proof.
\end{proof}

\begin{remark}
{\em
The expression of $K$ indicates two aspects. First, it shows that the error bound is proportional to $\varepsilon$, since $K$ does not depend on $\varepsilon$. Even the value of $K$ is unknown, one can evaluate that to what extent the accuracy is improved when $\varepsilon$ is reduced. Second, when the Laplacian matrix $L$ is known and the local constrains are bounded, $\kappa_g,\kappa_L$ and the upper bound of $\|\bm{u}(\cdot)\|$ can be estimated offline. In this case, the constant $K$ is available and one can determine the $\varepsilon$ to meet any accuracy of practical use by simple calculation.
}
\end{remark}
\subsection{Convergence}
The update flows \eqref{eq:Algorithmcompact2} can be written as
\begin{equation}\label{eq:algorithmz}
\dot{\bm{z}} = P_{\bm{\Lambda}}(\bm{z} - \bm{G}(\bm{z})) - \bm{z},
\end{equation}
where $\bm{z} \triangleq col(\bm{x},\bm{\lambda}), \bm{\Lambda} \triangleq \bm{\Omega} \times \mathbb{R}_+^{pN}$ and
\begin{equation*}
\bm{G}(\bm{z}) \triangleq \begin{bmatrix}
\nabla f(\bm{x}) + \bm{v}(\bm{\lambda},\bm{v})\\
\frac{1}{\varepsilon} \bm{L}\bm{\lambda} - \bm{u}(\bm{x})
\end{bmatrix}.
\end{equation*}
The map $\bm{G}$ is monotone because
\begin{equation*}
\begin{aligned}
& (\bm{z}' - \bm{z})^T (\bm{G}(\bm{z}') -\bm{G}(\bm{z})) \\
= {}&{} (\bm{x}' - \bm{x})^T (\nabla f(\bm{x}') - \nabla f(\bm{x}) + \bm{v}(\bm{x}',\bm{\lambda}') - \bm{v}(\bm{x},\bm{\lambda}))\\
& + (\bm{\lambda}' - \bm{\lambda})^T(- \bm{u}(\bm{x}') + \bm{u}(\bm{x})) + \frac{1}{\varepsilon}(\bm{\lambda}' - \bm{\lambda})^T\bm{L}(\bm{\lambda}' - \bm{\lambda})\\
\geq {}&{} \mu_f \|\bm{x}' - \bm{x}\|^2 + \frac{1}{\varepsilon}(\bm{\lambda}' - \bm{\lambda})^T\bm{L}(\bm{\lambda}' - \bm{\lambda}), \quad \forall\, \bm{z}',\bm{z}\in \bm{\Lambda}.
\end{aligned}
\end{equation*}
In order to obtain the convergence, we employ a Lyapunov candidate function
\begin{equation*}
V(\bm{z}) \triangleq (\bm{z}-\bm{H}(\bm{z}))^T\bm{G}(\bm{z}) - \frac{1}{2}\|\bm{z}-\bm{H}(\bm{z})\|^2+\frac{1}{2}\|\bm{z}-\bm{z}^*\|^2\text{,}
\end{equation*}
where $\bm{H}(\bm{z}) \triangleq P_{\bm{\Lambda}}(\bm{z} - \bm{G}(\bm{z}))$, and
\begin{equation}\label{eq:Lambdastar}
\bm{z}^* \in \bm{\Lambda}^* \triangleq \{\bm{x}_\varepsilon^*\} \times \{\bm{\lambda}\,|\,(\bm{x}_\varepsilon^*,\bm{\lambda}) \text{ satisfies \eqref{eq:equiAlg}}\}.
\end{equation}

\begin{lemma}\label{lem:3}
Under Assumption \ref{assum:1}, $V(\bm{z})$ is locally Lipschitz continuous in $\bm{\Lambda}$ and is positive definite with respect to $\bm{z}^*$, i.e.,
\begin{equation*}
V(\bm{z}) \geq 0, \, \forall\, \bm{z} \in \bm{\Lambda} \quad \text{and} \quad   V(\bm{z}) = 0 \Leftrightarrow \bm{z}=\bm{z}^*.
\end{equation*}
\end{lemma}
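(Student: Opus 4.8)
The plan is to split $V$ into a gap-function term plus the quadratic distance term, and read off both conclusions from this decomposition. Writing $\bm{r}(\bm{z}) \triangleq \bm{z} - \bm{H}(\bm{z})$ for the natural residual associated with $\VI(\bm{\Lambda},\bm{G})$, the first two terms of $V$ are exactly the Fukushima regularized gap function with regularization parameter $1$, namely $\varphi(\bm{z}) \triangleq \bm{G}(\bm{z})^T\bm{r}(\bm{z}) - \tfrac{1}{2}\|\bm{r}(\bm{z})\|^2$ (see \cite{Facchinei2003Finite}), so that $V(\bm{z}) = \varphi(\bm{z}) + \tfrac{1}{2}\|\bm{z}-\bm{z}^*\|^2$. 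Recognizing this identity is the conceptual crux; everything else follows from properties already established in the excerpt.

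For nonnegativity I would apply the projection inequality \eqref{eq:pro_projection} at the point $\bm{z}-\bm{G}(\bm{z})$, whose projection onto $\bm{\Lambda}$ is $\bm{H}(\bm{z})$ by definition, with test point $\bm{y}=\bm{z}\in\bm{\Lambda}$. This gives $(\bm{z}-\bm{G}(\bm{z})-\bm{H}(\bm{z}))^T(\bm{H}(\bm{z})-\bm{z}) \geq 0$, which rearranges to $\bm{G}(\bm{z})^T\bm{r}(\bm{z}) \geq \|\bm{r}(\bm{z})\|^2$. Substituting into $\varphi$ yields $\varphi(\bm{z}) \geq \tfrac{1}{2}\|\bm{r}(\bm{z})\|^2 \geq 0$, and since the quadratic term is nonnegative as well, $V(\bm{z})\geq 0$ throughout $\bm{\Lambda}$.

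For the characterization $V(\bm{z})=0\Leftrightarrow\bm{z}=\bm{z}^*$, the forward direction is immediate: both summands are nonnegative, so $V(\bm{z})=0$ forces $\tfrac{1}{2}\|\bm{z}-\bm{z}^*\|^2=0$, i.e. $\bm{z}=\bm{z}^*$. For the converse I would verify that the equilibrium condition \eqref{eq:equiAlg} is precisely $\bm{H}(\bm{z}^*)=\bm{z}^*$: the $\bm{x}$-block matches the $\bm{\Omega}$-component of $\bm{H}$ directly, while the $\bm{\lambda}$-block follows after dividing \eqref{eq:equiLambda} by $\varepsilon$ and invoking the positive homogeneity of $P_{\mathbb{R}^{pN}_+}$ (the nonnegative orthant being a cone). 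Hence $\bm{r}(\bm{z}^*)=\bm{0}$, so $\varphi(\bm{z}^*)=0$ and therefore $V(\bm{z}^*)=0$; this step is where the definition \eqref{eq:Lambdastar} of $\bm{z}^*$ as a genuine equilibrium is essential.

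Finally, for local Lipschitz continuity I would show that $\bm{G}$ is locally Lipschitz on $\bm{\Lambda}$: $\nabla f$ is globally $\kappa_f$-Lipschitz, the block $-\bm{u}(\bm{x})$ is $\kappa_g$-Lipschitz, $\tfrac{1}{\varepsilon}\bm{L}\bm{\lambda}$ is linear, and the block $\bm{v}$ with components $\nabla g_i(x_i)\lambda_i$ is locally Lipschitz because each $\nabla g_i$ is locally Lipschitz and, on any bounded set, the factor $\lambda_i$ is bounded. Since $P_{\bm{\Lambda}}$ is nonexpansive by \eqref{eq:pro_Lip}, the map $\bm{H}(\bm{z})=P_{\bm{\Lambda}}(\bm{z}-\bm{G}(\bm{z}))$ is locally Lipschitz, and $V$, being a finite combination of inner products and squared norms of locally Lipschitz maps, is locally Lipschitz on $\bm{\Lambda}$. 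The only genuinely delicate point here is the product term in $\bm{v}$, where $\nabla g_i$ is merely locally (not globally) Lipschitz; restricting to bounded sets handles it, and this is the part I expect to require the most care.
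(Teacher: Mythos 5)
Your proof is correct and follows essentially the same route as the paper: both recognize the first two terms of $V$ as a regularized gap function and deduce $V(\bm{z})\geq \tfrac{1}{2}\|\bm{z}-\bm{z}^*\|^2$, the paper via a completion-of-squares identity $\varphi(\bm{z})=\max_{\bm{y}\in\bm{\Lambda}}\{-\tfrac{1}{2}\|\bm{z}-\bm{G}(\bm{z})-\bm{y}\|^2\}+\tfrac{1}{2}\|\bm{G}(\bm{z})\|^2$ and you via the projection inequality \eqref{eq:pro_projection}, which are equivalent. Your treatment is in fact slightly more complete, since you explicitly verify $\bm{H}(\bm{z}^*)=\bm{z}^*$ (using positive homogeneity of $P_{\mathbb{R}^{pN}_+}$ to reconcile the $\varepsilon$-scaling in \eqref{eq:equiLambda}) and you address the locally-Lipschitz product term in $\bm{v}$, both of which the paper leaves implicit.
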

\begin{proof}
By \eqref{eq:pro_Lip}, $\bm{H}$ is locally Lipschitz continuous, which indicates that $V$ is also locally Lipschitz continuous. By calculations, $(\bm{z}-\bm{H}(\bm{z}))^T\bm{G}(\bm{z}) - \frac{1}{2}\|\bm{z}-\bm{H}(\bm{z})\|^2 = - \frac{1}{2}\|\bm{z} - \bm{G}(\bm{z}) - \bm{H}(\bm{z})\|^2 + \frac{1}{2}\|\bm{G}(\bm{z})\|^2 = \max_{\bm{y}\in \bm{\Lambda}} \{- \frac{1}{2}\|\bm{z} - \bm{G}(\bm{z}) - \bm{y}\|^2\} + \frac{1}{2}\|\bm{G}(\bm{z})\|^2 \geq 0$, where the inequality is obtained by letting $\bm{y} = \bm{z}$. Therefore,
\begin{equation*}
V(\bm{z}) \geq \frac{1}{2}\|\bm{z}-\bm{z}^*\|^2, \quad \forall\,\bm{z} \in \bm{\Lambda}.
\end{equation*}
This completes the proof.
\end{proof}

\begin{lemma}\label{lem:4}
Under Assumption \ref{assum:1}, dynamics \eqref{eq:algorithmz} has a unique trajectory $\bm{z}(t) \in \bm{\Lambda}, t\geq 0$. Moreover, the set of equilibria $\bm{\Lambda}^*$ given in \eqref{eq:Lambdastar} is Lyapunov stable.
\end{lemma}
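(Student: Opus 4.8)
The plan is to treat Lemma \ref{lem:4} in two stages: first well-posedness and forward invariance of the flow, then Lyapunov stability via the candidate $V$ built in Lemma \ref{lem:3}. For well-posedness I would first note that the right-hand side $\bm{H}(\bm{z})-\bm{z}$ of \eqref{eq:algorithmz} is locally Lipschitz, since $\bm{G}$ is locally Lipschitz (because $\nabla f$ is $\kappa_f$-Lipschitz, each $\nabla g_i$ is locally Lipschitz, $\bm{u}$ is $\kappa_g$-Lipschitz, and $\bm{L}$ is linear) while $P_{\bm{\Lambda}}$ is nonexpansive by \eqref{eq:pro_Lip}. The Picard--Lindel\"of theorem then gives a unique local solution from any $\bm{z}(0)\in\bm{\Lambda}$. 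To keep the solution in $\bm{\Lambda}$, I would observe that at any $\bm{z}\in\bm{\Lambda}$ the velocity $\bm{H}(\bm{z})-\bm{z}$ points from $\bm{z}$ toward the point $\bm{H}(\bm{z})\in\bm{\Lambda}$; as $\bm{\Lambda}$ is closed and convex, this velocity belongs to the tangent cone $\mathcal{T}_{\bm{\Lambda}}(\bm{z})$, so Nagumo's invariance theorem makes $\bm{\Lambda}$ forward invariant. Global existence on $[0,\infty)$ follows once the Lyapunov estimate below shows $\|\bm{z}(t)-\bm{z}^*\|$ stays bounded, ruling out finite escape time.

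For stability, fix any $\bm{z}^*\in\bm{\Lambda}^*$; by \eqref{eq:SOL2Pro} it solves $\VI(\bm{\Lambda},\bm{G})$, so $(\bm{z}-\bm{z}^*)^T\bm{G}(\bm{z}^*)\geq 0$ for all $\bm{z}\in\bm{\Lambda}$, and combining this with monotonicity of $\bm{G}$ yields $\bm{G}(\bm{z})^T(\bm{z}-\bm{z}^*)\geq 0$. Writing $e(\bm{z})=\bm{z}-\bm{H}(\bm{z})$ so that $\dot{\bm{z}}=-e$, I would split $V=\psi+\tfrac{1}{2}\|\bm{z}-\bm{z}^*\|^2$, where the gap part $\psi(\bm{z})=e^T\bm{G}(\bm{z})-\tfrac{1}{2}\|e\|^2=\max_{\bm{y}\in\bm{\Lambda}}\{\bm{G}(\bm{z})^T(\bm{z}-\bm{y})-\tfrac{1}{2}\|\bm{z}-\bm{y}\|^2\}\geq 0$ has $\bm{H}(\bm{z})$ as its (unique) maximizer. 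Applying the projection inequality \eqref{eq:pro_projection} at $x=\bm{z}-\bm{G}(\bm{z})$ with $y=\bm{z}^*$ controls the quadratic part, giving $\tfrac{d}{dt}\tfrac{1}{2}\|\bm{z}-\bm{z}^*\|^2\leq \psi(\bm{z})-\tfrac{1}{2}\|e\|^2$.

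The derivative of $\psi$ is the delicate point, since $V$ is only locally Lipschitz (the projection is non-differentiable) and $\bm{G}$ need not be differentiable. I would avoid the classical chain rule: along the $C^1$ trajectory, $t\mapsto V(\bm{z}(t))$ is locally Lipschitz, hence differentiable a.e., and a Danskin-type argument on the max-representation of $\psi$ lets me freeze the maximizer at $\bm{H}(\bm{z}(t))$, so only the explicit dependence through $\bm{G}(\bm{z})$ and $\bm{z}$ contributes. The single term that would ordinarily require differentiating $\bm{G}$ is instead handled by the finite-difference form of monotonicity, namely that $\tfrac{d}{dt}\bm{G}(\bm{z}(t))$ paired with $\dot{\bm{z}}(t)$ is nonnegative a.e. (a limit of $(\bm{G}(\bm{a})-\bm{G}(\bm{b}))^T(\bm{a}-\bm{b})\geq 0$). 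This yields $\dot{\psi}\leq -\bm{G}(\bm{z})^Te+\|e\|^2=-\psi+\tfrac{1}{2}\|e\|^2$, and adding the two bounds cancels everything to give $\dot{V}\leq 0$ a.e., hence $V(\bm{z}(t))\leq V(\bm{z}(0))$. Since $V\geq \tfrac{1}{2}\|\bm{z}-\bm{z}^*\|^2$ and $V$ is continuous with $V(\bm{z}^*)=0$, this is precisely Lyapunov stability of $\bm{\Lambda}^*$.

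The hardest part is this last step: legitimizing $\dot{V}\leq 0$ when neither $V$ nor $\bm{G}$ is differentiable. I expect the Danskin-type freezing of the projection, together with the finite-difference form of monotonicity (used in place of an infinitesimal $e^T\nabla\bm{G}\,e\geq 0$), to be the essential device that makes the estimate go through under the merely Lipschitz smoothness allowed by Assumption \ref{assum:1}.
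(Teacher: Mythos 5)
Your proposal is correct and follows essentially the same route as the paper: locally Lipschitz right-hand side plus tangent-cone invariance for well-posedness, and then $\dot V\le 0$ a.e.\ from the projection inequality \eqref{eq:pro_projection}, the fact that $\bm{z}^*$ solves $\VI(\bm{\Lambda},\bm{G})$, and monotonicity of $\bm{G}$ used both statically and in finite-difference form to handle the non-differentiable chain-rule term. The paper merely organizes the same inequalities differently, writing $\dot V=-(W_1+W_2+W_3)-\lim_{\tau\to0^+}\tau^{-1}\dot{\bm{z}}^T(\bm{G}(\bm{z}+\tau\dot{\bm{z}})-\bm{G}(\bm{z}))$ directly rather than splitting $V$ into the regularized gap function plus the quadratic and cancelling.
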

\begin{proof}
Since the right-hand side of \eqref{eq:algorithmz} is locally Lipschitz continuous, there exists a unique trajectory $\bm{z}(t)$. Also, since $\dot{\bm{z}} \in \mathcal{T}_{\bm{\Lambda}}(\bm{z})$, $\bm{z}(t) \in \bm{\Lambda}$ for all $t\geq 0$.

For the Lyapunov stability of \eqref{eq:algorithmz}, it suffices to prove that $V(\bm{z}(t))$ is non-increasing with respect to $t$.
Since $V(\bm{z})$ is locally Lipshcitz continuous and $\bm{z}(t)$ is continuously differentiable, $V(\bm{z}(t))$ is differentiable for almost all $t>0$ with
\begin{multline*}
\dot{V}(\bm{z}(t)) = \dot{\bm{z}}^T (\bm{G}(\bm{z}) + \bm{H}(\bm{z}) - \bm{z}^*) \\
- \lim_{\tau\to 0^+} \frac{\dot{\bm{z}}^T(\bm{G}(\bm{z} + \tau\dot{\bm{z}}) - \bm{G}(\bm{z}))}{\tau}.
\end{multline*}
Since $\bm{G}$ is monotone, $\dot{\bm{z}}^T(\bm{G}(\bm{z} + \tau\dot{\bm{z}}) - \bm{G}(\bm{z})) \geq 0$.
Also,
\begin{equation*}
\dot{\bm{z}}^T (\bm{G}(\bm{z}) + \bm{H}(\bm{z}) - \bm{z}^*) = -(W_1(\bm{z}) + W_2(\bm{z}) + W_3(\bm{z})),
\end{equation*}
where
\begin{align*}
W_1(\bm{z}) & = (\bm{z}^* - \bm{H}(\bm{z}))^T(\bm{H}(\bm{z}) + \bm{G}(\bm{z}) - \bm{z}),\\
W_2(\bm{z}) & = (\bm{z} - \bm{z}^*)^T\bm{G}(\bm{z}^*),\\
W_3(\bm{z}) & = (\bm{z} - \bm{z}^*)^T(\bm{G}(\bm{z}) - \bm{G}(\bm{z}^*)).
\end{align*}
It follows from \eqref{eq:pro_projection} that $W_1(\bm{z}) \geq 0$. Moreover, $W_2(\bm{z}) \geq 0$ because
$\bm{z}^*$ is a solution to the variational inequality $\VI(\bm{\Lambda}, \bm{G})$. Furthermore, $W_3(\bm{z}) \geq 0$ due to the monotonicity of $\bm{G}$. As a result, $\dot{V}(\bm{z}(t))\leq 0$ for almost all $t>0$. This completes the proof.
\end{proof}

The convergence analysis is given in the following result.
\begin{theorem}\label{thm:3}
Under Assumption \ref{assum:1}, for any $\varepsilon \in (0,\varepsilon^*)$, the trajectory of Algorithm \ref{alg:2} converges to an equilibrium point, i.e.,
\begin{equation}\label{eq:convergence}
\lim_{t\to \infty} \bm{z}(t) = \tilde{\bm{z}}^* \in \bm{\Lambda}^*,
\end{equation}
where $\bm{\Lambda}^*$ is given in \eqref{eq:Lambdastar}.
\end{theorem}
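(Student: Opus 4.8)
The plan is to combine the Lyapunov properties from Lemmas \ref{lem:3} and \ref{lem:4} with LaSalle's invariance principle, and then to upgrade convergence to the equilibrium set into convergence to a single point by re-centering the Lyapunov function. First I would fix an equilibrium $\bm{z}^* \in \bm{\Lambda}^*$ and note that, by Lemma \ref{lem:4}, $V(\bm{z}(t))$ is non-increasing along the trajectory of \eqref{eq:algorithmz}; since $V \geq 0$ by Lemma \ref{lem:3}, $V(\bm{z}(t))$ converges to some $c_\infty \geq 0$. The lower bound $V(\bm{z}) \geq \tfrac{1}{2}\|\bm{z}-\bm{z}^*\|^2$ then yields $\|\bm{z}(t)-\bm{z}^*\| \leq \sqrt{2V(\bm{z}(0))}$ for all $t$, so the trajectory is bounded and, by Lemma \ref{lem:4}, stays in the closed set $\bm{\Lambda}$. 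Hence its $\omega$-limit set $\Omega_L \subseteq \bm{\Lambda}$ is nonempty, compact and invariant, and $V \equiv c_\infty$ on $\Omega_L$ by continuity, so $\dot{V} \equiv 0$ along any trajectory contained in $\Omega_L$.

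Next I would apply LaSalle's invariance principle and read off $\{\dot{V}=0\}$ from the decomposition in the proof of Lemma \ref{lem:4}. Since $\dot{V} = -(W_1+W_2+W_3) - \lim_{\tau\to 0^+}\tfrac{1}{\tau}\dot{\bm{z}}^T(\bm{G}(\bm{z}+\tau\dot{\bm{z}})-\bm{G}(\bm{z}))$ with all four terms nonnegative, $\dot{V}=0$ forces each to vanish. From $W_3=0$ and the strong-monotonicity estimate $(\bm{z}-\bm{z}^*)^T(\bm{G}(\bm{z})-\bm{G}(\bm{z}^*)) \geq \mu_f\|\bm{x}-\bm{x}^*_\varepsilon\|^2$ I obtain $\bm{x}=\bm{x}^*_\varepsilon$, while the directional term, bounded below by $\mu_f\|\dot{\bm{x}}\|^2 + \tfrac{1}{\varepsilon}\dot{\bm{\lambda}}^T\bm{L}\dot{\bm{\lambda}}$, gives $\dot{\bm{x}}=0$ and $\dot{\bm{\lambda}}^T\bm{L}\dot{\bm{\lambda}}=0$.

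The hard part is to conclude that every point of $\Omega_L$ is a genuine equilibrium, i.e. $\Omega_L \subseteq \bm{\Lambda}^*$, since the projected non-differentiable dual dynamics leaves $\bm{\lambda}$ underdetermined: the primal is already pinned down ($\bm{x}\equiv\bm{x}^*_\varepsilon$, $\dot{\bm{x}}=0$, so \eqref{eq:equiX} holds), but $\dot{\bm{\lambda}}^T\bm{L}\dot{\bm{\lambda}}=0$ only forces $\dot{\bm{\lambda}}$ into the consensus kernel $\ker(\bm{L}+\bm{L}^T)$. To close this I would use the invariance of $\Omega_L$ together with the semidefinite structure of $\bm{L}$: restricted to $\Omega_L$, with $\bm{x}$ frozen at $\bm{x}^*_\varepsilon$, the dual flow reduces to a projected dynamics for the monotone affine map $\bm{\lambda}\mapsto \tfrac{1}{\varepsilon}\bm{L}\bm{\lambda}-\bm{u}(\bm{x}^*_\varepsilon)$; invariance and boundedness on the compact set $\Omega_L$, combined with $\bm{L}^T\bm{1}=\bm{0}$, rule out any nonzero drift in the consensus direction and force $\dot{\bm{\lambda}}=0$, so \eqref{eq:equiLambda} holds and the point solves \eqref{eq:equiAlg}. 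This is the main obstacle precisely because the smoothness-based singular-perturbation arguments of \cite{Kokotovic1999Singular} are unavailable here.

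Finally, having secured a point $\tilde{\bm{z}}^* \in \Omega_L \subseteq \bm{\Lambda}^*$, I would re-center the analysis at $\tilde{\bm{z}}^*$. Because $\tilde{\bm{z}}^*$ is itself an equilibrium, Lemma \ref{lem:4} applies verbatim with the reference point replaced by $\tilde{\bm{z}}^*$, so $V_{\tilde{\bm{z}}^*}(\bm{z}(t))$ is non-increasing; and since $\tilde{\bm{z}}^*\in\Omega_L$ there is a sequence $t_k\to\infty$ with $\bm{z}(t_k)\to\tilde{\bm{z}}^*$, whence $V_{\tilde{\bm{z}}^*}(\bm{z}(t_k))\to V_{\tilde{\bm{z}}^*}(\tilde{\bm{z}}^*)=0$ by continuity and Lemma \ref{lem:3}. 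A non-increasing function with a subsequence tending to $0$ tends to $0$, so $V_{\tilde{\bm{z}}^*}(\bm{z}(t))\to 0$, and the bound $\tfrac{1}{2}\|\bm{z}(t)-\tilde{\bm{z}}^*\|^2 \leq V_{\tilde{\bm{z}}^*}(\bm{z}(t))$ forces $\bm{z}(t)\to\tilde{\bm{z}}^*$, which is exactly \eqref{eq:convergence}.
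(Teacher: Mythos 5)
Your overall architecture matches the paper's: LaSalle's invariance principle with the Lyapunov function of Lemmas \ref{lem:3}--\ref{lem:4}, identification of the invariant set inside $\bm{\Lambda}^*$, and then re-centering the Lyapunov function at a cluster point $\tilde{\bm{z}}^*$ to upgrade set convergence to pointwise convergence. The first and last stages are sound, and the re-centering argument is exactly the paper's.

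The gap is in the middle stage, precisely where you flag ``the hard part'': showing that on the invariant set the dual variable is actually stationary. Your proposed mechanism --- that invariance and boundedness of $\Omega_L$, plus $\bm{L}^T\bm{1}=\bm{0}$, ``rule out any nonzero drift in the consensus direction'' --- is not a proof. A trajectory confined to a compact invariant set can perfectly well have nonvanishing velocity (periodic or oscillatory motion in the consensus subspace is not excluded by boundedness alone), so knowing $\dot{\bm{\lambda}}(t)\in\ker(\bm{L}+\bm{L}^T)$ does not force $\dot{\bm{\lambda}}=0$ without a further monotonicity or Lyapunov argument on the reduced dual flow, which you do not supply. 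The paper closes this step statically rather than dynamically, using the two pieces of information you list but do not exploit. First, $W_3=0$ yields not only $\bm{x}=\bm{x}^*_\varepsilon$ but also $(\bm{\lambda}-\bm{\lambda}^*_\varepsilon)^T(\bm{L}+\bm{L}^T)(\bm{\lambda}-\bm{\lambda}^*_\varepsilon)=0$, hence $\bm{L}(\bm{\lambda}-\bm{\lambda}^*_\varepsilon)=\bm{0}$ by positive semidefiniteness. Second, $W_2=0$ together with $\bm{x}=\bm{x}^*_\varepsilon$ gives $(\bm{\lambda}-\bm{\lambda}^*_\varepsilon)^T(\bm{L}\bm{\lambda}^*_\varepsilon-\varepsilon\bm{u}(\bm{x}^*_\varepsilon))=0$. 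Combining the two produces the complementarity condition
\begin{equation*}
\bm{0}\leq \bm{\lambda} \perp \bm{L}\bm{\lambda} - \varepsilon\bm{u}(\bm{x}^*_\varepsilon) \geq \bm{0},
\end{equation*}
which says $\bm{\lambda}$ satisfies \eqref{eq:equiLambda2} (equivalently \eqref{eq:equiLambda}) directly, with no appeal to the time evolution of $\bm{\lambda}$ at all. You should replace your drift argument with this use of $W_2=0$ and the full consequence of $W_3=0$; as written, the inclusion $\Omega_L\subseteq\bm{\Lambda}^*$ is asserted rather than proved.
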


\begin{proof}
Since $V(\bm{z}(t))$ is continuous and non-increasing, it follows from the invariance principle that $\bm{z}(t)$ converges to the largest invariant set $\bm{\mathcal{Z}}_{\text{inv}} \subset \bm{\mathcal{Z}}$, where
\begin{multline*}
\bm{\mathcal{Z}} \triangleq \{\bm{z}\in \bm{\Lambda}\,|\, (\bm{z} - \bm{z}^*)^T(\bm{G}(\bm{z}) - \bm{G}(\bm{z}^*)) =0, \\ \text{and } (\bm{z} - \bm{z}^*)^T\bm{G}(\bm{z}^*) =0\}.
\end{multline*}
By the monotonicity of $\bm{G}$, $\bm{z} \in \bm{\mathcal{Z}}$ implies $\bm{x} = \bm{x}^*_\varepsilon$ and $\bm{L}(\bm{\lambda}- \bm{\lambda}^*_\varepsilon) = \bm{0}$. On the one hand, for $\{\bm{x}^*_\varepsilon\}$ being invariant, it is necessary that $\dot{\bm{x}} = 0$ for any $(\bm{x},\bm{\lambda}) \in \bm{\mathcal{Z}}_{\text{inv}}$. Thus, $\bm{\lambda}$ satisfies \eqref{eq:equiX}.
On the other hand, it follows from $(\bm{z} - \bm{z}^*)^T\bm{G}(\bm{z}^*) = 0$ that $(\bm{\lambda}- \bm{\lambda}^*_\varepsilon)^T(\bm{L}\bm{\lambda}^*_\varepsilon- \varepsilon\bm{u}(\bm{x}^*_\varepsilon)) = 0$. Therefore,
\begin{equation*}
\bm{0}\leq \bm{\lambda} \perp \bm{L}\bm{\lambda} - \varepsilon\bm{u}(\bm{x}^*_\varepsilon) \geq \bm{0},
\end{equation*}
which implies that $\bm{\lambda}$ satisfies \eqref{eq:equiLambda2}.
Thus, $\bm{\mathcal{Z}}_{\text{inv}} \subset \bm{\Lambda}^*$.

Let $\tilde{\bm{z}}^*$ be a cluster point of $\bm{z}(t)$ as $t\to +\infty$, i.e., $\tilde{\bm{z}}^*$ is a positive limit point of $\bm{z}(t)$. Then $\tilde{\bm{z}}^* \in \bm{\mathcal{Z}}_{\text{inv}}$ because the positive limit set is invariant \cite[Lemma 4.1]{Khalil2002Nonlinear}.
Redefine a Lyapunov function as
\begin{equation*}
\tilde{V}(\bm{z}) \triangleq (\bm{z}-\bm{H}(\bm{z}))^T\bm{G}(\bm{z}) - \frac{1}{2}\|\bm{z}-\bm{H}(\bm{z})\|^2+\frac{1}{2}\|\bm{z}-\tilde{\bm{z}}^*\|^2.
\end{equation*}
Since $\tilde{\bm{z}}^* \in \bm{\mathcal{Z}}_{\text{inv}} \subset \bm{\Lambda}^*$, it follows from similar arguments in Lemmas \ref{lem:3} and \ref{lem:4} that $\tilde{V}$ is non-increasing along the trajectory $\bm{z}(t)$, and meanwhile, $\tilde{V}(\bm{z}(t))\to 0$ as $t\to +\infty$. Thus, the conclusion follows.
\end{proof}

\begin{remark}
{\em
The convergence analysis is based on Lyapunov functions $V$ and $\tilde{V}$. Similar functions have also been considered in \cite{Yi2016Initialization}, where a derivative formula for $\nabla V$ is needed with the help of $\nabla^2 f$. Here, the convergence analysis does not require the twice differentiability of the cost function.
}
\end{remark}
\section{Numerical experiments}
\label{sec:simulation}
\begin{table*}
\caption{Performance in communication burden, termination time, and relative error of our sub-optimal algorithm and algorithm \eqref{eq:algorithmCompair} over different types of graphs with various network sizes. Here, $t_{\text{ter}} = \infty$ means that the algorithm is divergent.}\label{tab:1}
\vspace{4pt}
\begin{tabular}{cccccccccccc}
  \hline \hline
  \multirow{2}*{$N$} & graph & \multirow{2}*{$d_{\text{mean}}$} & \multirow{2}*{$d_{\text{max}}$} &\multicolumn{2}{c}{algorithm \eqref{eq:algorithmCompair}}
  & \multicolumn{2}{c}{$\varepsilon = 0.1$} & \multicolumn{2}{c}{$\varepsilon = 0.01$} & \multicolumn{2}{c}{$\varepsilon = 0.001$}\\
  \cline{5-6}\cline{7-8}\cline{9-10}\cline{11-12}
  & type & & & $t_{\text{ter}}$ & $e_{\text{rel}}$ & $t_{\text{ter}}$ & $e_{\text{rel}}$ & $t_{\text{ter}}$ & $e_{\text{rel}}$ & $t_{\text{ter}}$ & $e_{\text{rel}}$ \\ \hline
  \multirow{3}*{$10$}
&	circle	&	2	&	2	&	$\infty$	&	-	&	$12.384$	&	$7.4768\%$	&	$12.697$	&	$0.9062\%$	&	$12.906$	&	$0.0929\%$	\\
&	random	&	8	&	11	&	$180.108$	&	$0.0008\%$	&	$12.615$	&	$9.0475\%$	&	$12.686$	&	$1.1907\%$	&	$12.9$	&	$0.1233\%$	\\
&	complete	&	18	&	18	&	$31.836$	&	$0.0003\%$	&	$12.36$	&	$3.5692\%$	&	$12.72$	&	$0.4063\%$	&	$12.909$	&	$0.0419\%$	\\
\hline
\multirow{3}*{$50$}
&	circle	&	2	&	2	&	$69.404$	&	$0.0002\%$	&	$13.51$	&	$1.3965\%$	&	$13.631$	&	$0.1627\%$	&	$13.669$	&	$0.0166\%$	\\
&	random	&	47.96	&	57	&	$124.801$	&	$0.0004\%$	&	$13.508$	&	$2.0427\%$	&	$13.617$	&	$0.2543\%$	&	$13.667$	&	$0.0261\%$	\\
&	complete	&	98	&	98	&	$26.597$	&	$0.0002\%$	&	$13.543$	&	$0.8140\%$	&	$13.651$	&	$0.0883\%$	&	$13.671$	&	$0.009\%$	\\
\hline
\multirow{3}*{$100$}
&	circle	&	2	&	2	&	$90.799$	&	$<0.0001\%$	&	$13.903$	&	$1.9957\%$	&	$14.021$	&	$0.2295\%$	&	$14.062$	&	$0.0233\%$	\\
&	random	&	97.98	&	112	&	$705.96$	&	$0.0006\%$	&	$13.899$	&	$4.7095\%$	&	$13.963$	&	$0.7167\%$	&	$14.052$	&	$0.0759\%$	\\
&	complete	&	198	&	198	&	$27.46$	&	$0.0001\%$	&	$13.923$	&	$1.1618\%$	&	$14.042$	&	$0.1257\%$	&	$14.065$	&	$0.0127\%$	\\
\hline
\multirow{3}*{$500$}
&	circle	&	2	&	2	&	$44.782$	&	$<0.0001\%$	&	$14.875$	&	$0.0077\%$	&	$14.876$	&	$0.0009\%$	&	$14.877$	&	$<0.0001\%$	\\
&	random	&	497.808	&	542	&	$1743.164$	&	$0.0007\%$	&	$15.127$	&	$0.0314\%$	&	$14.875$	&	$0.0078\%$	&	$14.876$	&	$0.0009\%$	\\
&	complete	&	998	&	998	&	$20.725$	&	$<0.0001\%$	&	$14.875$	&	$0.0042\%$	&	$14.88$	&	$0.0005\%$	&	$14.877$	&	$<0.0001\%$	\\
\hline
\multirow{3}*{$1000$}
&	circle	&	2	&	2	&	$\infty$	&	-	&	$22.572$	&	$8.8231\%$	&	$22.185$	&	$2.5975\%$	&	$15.01$	&	$0.6054\%$	\\
&	random	&	998.572	&	1065	&	$>2000$	&	$7.4794\%$	&	$23.487$	&	$19.4877\%$	&	$15.206$	&	$6.2969\%$	&	$14.716$	&	$0.9531\%$	\\
&	complete	&	1998	&	1998	&	$53.428$	&	$<0.0001\%$	&	$14.65$	&	$3.0983\%$	&	$14.987$	&	$0.3729\%$	&	$15.21$	&	$0.0385\%$	\\
  \hline \hline
\end{tabular}
\end{table*}
Consider a virtualized 5G system consisting of $N$ slices \cite{Halabian2019D5G}. Each slice shares $M$ virtual network functions (VNFs), which are being distributed over $K$ data centers (DCs). Each DC provides resources such as CPU, RAM, bandwidth, and storage. The amount of these $\ell$ types of resources are denoted by vectors $R_{k}\in \mathbb{R}^\ell, k \in \{1,2,...,K\}$. Also, each slice $i\in \{1,2,...,N\}$ is associated with a set of demand vectors denoted by $d^{k,m}_i \in \mathbb{R}^\ell$ for each DC $k$ and each VNF $m$. The optimization problem is to determine the amount of resources allocated to each of the VNFs in each DC by minimizing the sum of cost functions of slice thicknesses subjected to resource constraints, i.e.,
\begin{equation*}
\begin{array}{ll}
\mathop{\text{Minimize}}\limits_{\bm{x}} & \sum_{i = 1}^N f_i(x_i), \quad f_i(x_i) = \frac{1}{2}(x_i - \alpha_i)^2\\
\text{Subject to} & \sum_{i=1}^N\sum_{m=1}^Mx_id^{k,m}_i\leq R_{k},\quad k = 1,2,...,K\\
& x_i \geq 0, \quad i = 1,2,...,N
\end{array}
\end{equation*}
Set $\ell = K = M = 1$ and $N = 10, 50, 100, 500, 1000$ with {\em directed circles}, {\em random digraphs}, and {\em complete graphs}, respectively.
Generate randomly $\alpha_i\in [0.5, 2], d_i^{1,1}\in [0,1], R_1\in [0.5N, 2N]$ for $i = 1,2,...,N$.
Set tolerance $\epsilon = 10^{-5}$ with the stopping criterion
\begin{equation*}
\|\dot{\bm{z}}(t)\| \leq \epsilon,
\end{equation*}
where $\dot{\bm{z}}(t)$ was given in \eqref{eq:algorithmz}. Record the termination time, denoted by $t_{\text{ter}}$, and calculate the relative error
\begin{equation*}
e_{\text{rel}} = \frac{\|\bm{x}(t_{\text{ter}}) - \bm{x}^*\|}{\|\bm{x}^*\|} \times 100\%.
\end{equation*}
The instant communication burden of an agent can be characterized by the number of times that it sends and receives information in a unit running time, which equals the sum of its out-degree and in-degree. We record the mean and maximum of such degrees among all agents, denoted by $d_{\text{mean}}$ and $d_{\text{max}}$, respectively.
The total amount of communication per agent can be evaluated by using $d_{\text{mean}}\cdot t_{\text{ter}}$ and $d_{\text{max}}\cdot t_{\text{ter}}$ for our algorithm and $2d_{\text{mean}}\cdot t_{\text{ter}}$ and $2d_{\text{max}}\cdot t_{\text{ter}}$ for algorithm \eqref{eq:algorithmCompair}.
Note that these two algorithms do not necessarily share the same termination time $t_{\text{ter}}$, because their convergence speed may be different.
In the experiments, the Euler's method is employed to discretize these algorithms with fixed stepsize $0.001$, and the Laplacian matrices are normalized by scaling the balanced weights such that $\|L\| = 1$. Numerical results in Table \ref{tab:1} show that our algorithm achieves acceptable accuracy, fast convergence speed, and significant reduction of computation and communication burden.
\section{Conclusions}
\label{sec:conclusions}
A distributed continuous-time algorithm has been proposed for resource allocation optimization with local set constraints and coupled inequality constraints over weight-balanced graphs. Existence and sub-optimality of the equilibrium have been established with convergence analysis. Our algorithm and analysis approach have demonstrated the effectiveness of the singular perturbation based sub-optimal design even with non-differentiable right-hand side.
\bibliographystyle{dcu}        
\bibliography{../../../../bib/refference0}
\end{document}